\documentclass[a4paper,12pt]{amsart}
\usepackage{amsfonts}
\usepackage{amsmath}
\usepackage{amssymb}
\usepackage{lipsum}
\usepackage{mathrsfs}
\usepackage{xcolor}
\usepackage{hyperref}
\usepackage{amsmath}
\usepackage{amssymb}
\usepackage{tabularx}
\usepackage{orcidlink}
\usepackage{enumerate}
\usepackage[utf8]{inputenc}
\numberwithin{equation}{section}

\usepackage{graphicx}
\usepackage{enumitem}

\def\R2n{{\mathbb R}^{2n}}

\def\R2{{\mathbb R}^2}
\def\R2n{{\mathbb R}^{2n}}

\begin{document}
	\numberwithin{figure}{section}
	\numberwithin{table}{section}
	\numberwithin{equation}{section}
	\newtheorem{Theorem}{Theorem}
	\newtheorem{corollary}{Corollary}[Theorem]
	\newtheorem{conjecture}{Conjecture}[Theorem]
	\newtheorem{proposition}{Proposition}[section]
	\newtheorem{Lemma}{Lemma}[section]
	\theoremstyle{definition}
	\newtheorem{conj}{Conjecture}[section]
	\newtheorem{exmp}{Example}[section]
	\newtheorem{exmp*}{Example}
	\theoremstyle{remark}
\title{
Power Integral Bases in Polynomial Compositions}

\author[A. Choudhary]{Aakash Choudhary} \address[A. Choudhary]{Department of Mathematics, Indian Institute of Science Education and Research, Pune, Maharashtra,
India-411008}\email{achoudhary1396@gmail.com}

\author[S. Pisolkar]{Supriya Pisolkar\,
}\address[S. Pisolkar]{Department of Mathematics, Indian Institute of Science Education and Research, Pune, Maharashtra,
India-411008}\email{supriya@iiserpune.ac.in}

\author[P. Yadav]{Prabhakar Yadav} \address[P. Yadav]{Ashoka University, Sonepat, Haryana, India-131021}\email{pkyadav914@gmail.com}
\begin{abstract}

In this paper, we study the monogeneity of a special class of composed polynomials of the form 
$
(f \circ g)(x) = (x^m + c)^n + a(x^m + c)^{n-1} + d(x^m + c)^{n-2} + b,$
where \( f(x) = x^n + a x^{n-1} + d x^{n-2} + b \in \mathbb{Z}[x] \) satisfies \( a^2 = 4d \) and \( g(x) = x^m + c \in \mathbb{Z}[x] \). 
Assuming that \( (f \circ g)(x) \) is irreducible over \( \mathbb{Q} \), we obtain necessary and sufficient conditions on the parameters \( a, b, c, d, m, n \) for the polynomial to be monogenic. 
These conditions help to identify when the set \( \{1, \theta, \dots, \theta^{mn-1}\} \) forms an integral basis of the number field \( \mathbb{Q}(\theta) \), where \( \theta \) is a root of \( (f \circ g)(x) \). We also provide lower bound for the counting of such monogenic polynomials. Furthermore, we study the behaviour of solutions to certain related differential equations and present a class of polynomials with non-square-free discriminants as an application of the main results.

\end{abstract}
\maketitle

\textbf{Key words and phrases :} Rings of algebraic integers; Index of an 
\par algebraic integer; Power basis.
\par
\textbf{2010 MSC. } 
11R04; 11R29, 11Y40.

\section{Introduction}  

Let \( K \) be an algebraic number field, \(\mathcal{O}_K \) denotes the ring of integers and \( \theta \in \mathcal{O}_K \).
A fundamental question in algebraic number theory is whether \( K \) is monogenic,  
that is, whether there exists an element \( \theta \in \mathcal{O}_K \) such that  
$
\mathcal{O}_K = \mathbb{Z}[\theta],$
or equivalently, whether \( \mathcal{O}_K \) possesses a integral basis generated by powers of $\theta$. The problem of providing an arithmetic characterization of monogenic number fields was first posed by Hasse~\cite{hasse} in 1960. 
Although a complete classification remains tricky, substantial progress has been achieved in this area. 
Notably, Ga\'{a}l~\cite{Gaal_book} made important contributions by developing methods based on index form equations 
to classify monogenity in number fields of lower degrees.

\vspace{2mm}
Let \( K = \mathbb{Q}(\theta) \) be an algebraic number field, where \( \theta \in \mathcal{O}_K \).  Suppose \( h(x) \) is the minimal polynomial of \(\theta\) over \( \mathbb{Q} \), having degree \( n \).  
Let \( d_K \) represent the discriminant of the field \( K \), and \( D_h \) the discriminant of the polynomial \( h(x) \).  
It is well known that these two discriminants are connected by the relation:
$$
D_h = [\mathcal{O}_K : \mathbb{Z}[\theta]]^2d_K.
$$
A polynomial \( h(x) \) is said to be monogenic if \( \mathcal{O}_K = \mathbb{Z}[\theta] \), 
which is equivalent to the equality \( D_h = d_K \).  
In this case, the set \(\{1, \theta, \dots, \theta^{n-1}\}\) forms an integral basis of \( K \), 
and hence \( K \) is called a monogenic number field.  
More generally, a number field \( K \) is said to be monogenic if there exists an element 
\(\theta \in \mathcal{O}_K\) such that \( \mathcal{O}_K = \mathbb{Z}[\theta] \).  Note that if a polynomial \( h(x) \) is monogenic, then the number field 
\( K = \mathbb{Q}(\theta) \), where \(\theta\) is a root of \( h(x)\), is also monogenic.  
However, the converse does not necessarily hold.  
For example, let \(\varepsilon\) and \(\gamma\) be the roots of \( h_1(x) = x^2 - 5 \) 
and \( h_2(x) = x^2 - x - 1 \), respectively.  
Although \( \mathbb{Q}(\varepsilon) = \mathbb{Q}(\gamma) \), 
the polynomial \( h_2(x) \) is monogenic while \( h_1(x) \) is not.

\vspace{2mm}
In 1878, Dedekind provided a necessary and sufficient criterion for the minimal polynomial \( f(x) \) of \(\theta\) such that a prime \( p \) does not divide \([\mathcal{O}_K : \mathbb{Z}[\theta]]\) (cf. \cite[Theorem 6.1.4]{h_cohen}, \cite{dedekind}). 
In 2016, Jhorar and Khanduja provided necessary and sufficient conditions for \( \mathcal{O}_K = \mathbb{Z}[\theta] \) when \(\theta\) is a root of an irreducible binomial \( x^n - b \in \mathbb{Z}[x] \) (cf. \cite{jhorar20166}). Later in 2016, Jakhar et.al. in \cite{jakhar_jnt} extended this result to the polynomial \( x^n + ax^m + b \in \mathbb{Z}[x] \). Gaál’s recent survey \cite{gaalsurvey} provides an overview of recent developments on monogenic number fields and power integral bases, including pure fields, trinomials, relative extensions, and algorithmic methods.


Recent work has shown growing interest in the monogeneity of power-compositional polynomials. Harrington and Jones \cite{harrington2022} studied power-compositional trinomials of the form \(F(x)=x^m+Ax^{m-1}+B\) and obtained conditions under which \(F(x^{p^n})\) is monogenic for every integer \(n\ge 0\) and a fixed prime \(p\). Jones \cite{jonespower} constructed infinite families of monic Eisenstein polynomials whose power-compositional polynomials remain monogenic. He also established a criterion for the monogenicity of power-compositional characteristic polynomials in terms of the period of the associated linear recurrence sequence \cite{jones2024monogenicity}. In a related direction, Kaur \textit{et al.} \cite{suman_ffa} gave necessary and sufficient conditions for the monogenity of monic irreducible power-compositional polynomials \(f(x^k)\in\mathbb{Z}[x]\).

\vspace{2mm}
 In 2020, Jones and Harrington \cite{jones_taiwan} examined several pairs of binomials \( f(x) = x^n - a \) and \( g(x) = x^m - b \) that possess the property where both \( f(x) \) and \( f(g(x)) \) are monogenic.  
In 2022, Ga{\'a}l \cite{gaal2022} described the monogenic properties of a class of binomial compositions of degree six. Recently, Jakhar \textit{et al.} \cite{jakhar2025study} extended it to any  $n,m \in \mathbb{N},$ and Sharma \textit{et al.} \cite{himanshu_ramanujan} investigated the monogenity of 
irreducible compositions \(f \circ g(x)\), where \(f(x) = x^n + ax + b \in \mathbb{Z}[x]\) 
and \(g(x) \in \mathbb{Z}[x]\) satisfies certain restrictions.

\vspace{2mm}
In this paper, we study the monogeneity of the composition of the quadrinomial \( f(x) = x^n + ax^{n-1} +dx^{n-2} + b \in \mathbb{Z}[x] \), where $a^2 =4d$ with \( n \geq 3 \), and the binomial \( g(x) = x^m + c \in \mathbb{Z}[x] \). The monogeneity of the quadrinomial $f(x)$ was explored by Jakhar in \cite{jakharrocky}.
It is important to observe that when $m=1$, the composition $f \circ g$ reduces to a mere translation of $f$ by an integer constant $c$. In this situation, the monogeneity of $f(x+c)$ and $f(x)$ are same. Hence, in what follows, we will discuss the necessary and sufficient conditions for the monogenity of $f(g(x))$  for $m \geq 2$.

\vspace{2mm}
Let \( K = \mathbb{Q}(\theta) \) be an algebraic number field, where \(\theta\) is a root of the irreducible polynomial  
$F(x) = (f \circ g)(x) = (x^m + c)^n + a(x^m + c)^{n-1} + d(x^m + c)^{n-2} + b$ 
over \(\mathbb{Q}\), where  \( a^2 = 4d \), \( n \geq 3 \) and \( m \geq 2 \).  
In this article, we apply Dedekind’s Criterion to establish necessary and sufficient conditions involving 
\( a, b, c, d, m, \) and \( n \) for the polynomial \( F(x) \) to be monogenic. 
Furthermore, we identify all primes that divide the index \([\mathcal{O}_K : \mathbb{Z}[\theta]]\). 
In addition, we illustrate our results through explicit examples. 
In certain cases of non-monogenic compositions, we also compute the index \( [\mathcal{O}_K : \mathbb{Z}[\theta]] \).

\vspace{2mm}
Let $D_f$ denote the discriminant of the polynomial  
\(
f(x) = x^n + ax^{n-1} + dx^{n-2} + b.
\)  
To compute $D_f$, note that under the condition $a^2 = 4d$, we obtain  $
f'(x) = x^{n-3}(nx^2 + a(n-1)x + (a/2)^2(n-2)).$
The derivative $f'(x)$ has rational roots given by 
\[
\varepsilon_1 = \varepsilon_2 = \cdots = \varepsilon_{n-3} = 0, \quad 
\varepsilon_{n-2} = -\tfrac{a}{2}, \quad 
\varepsilon_{n-1} = -\tfrac{a}{2} + \tfrac{a}{n}.
\]  
Therefore,  
\[
D_f = (-1)^{n(n-1)/2}\,\mathrm{Res}(f,f') 
= (-1)^{n(n-1)/2}\, n^n f(0)^{\,n-3} f(-a/2)\, f\!\left(-\tfrac{a}{2} + \tfrac{a}{n}\right).
\]
A straightforward computation leads to  
\begin{equation}{\label{22}}
D_f = (-1)^{n(n-1)/2}(-b)^{n-2}\{b(-n)^n+4 (n-2)^{n-2}(a/2)^n \}.
\end{equation}

\noindent Let $D_F$ denote the discriminant of the composition $F= f\circ g$. 
By \cite{jc}, we have

\begin{align*}
    D_F &= (-1)^{nm(3nm+n-2m-2)/2} (D_f)^m \operatorname{Res}(f\circ g,g^\prime) \\
    & = (-1)^{nm(3nm-2m+n-2)/2} (D_f)^m m^{mn} (-1)^{nm(m-1)} \prod_{i=1}^{m-1}f(g(0)) \\ 
    &=(-1)^{nm(3nm-2m+n-2)/2 + nm(m-1)} (D_f)^m m^{mn} f(c)^{m-1}. 
\end{align*}

\noindent
Combining this with eq. \ref{22} and simplifying the power of $-1$, we have the following value of discriminant of the composition $F=f\circ g$:
\begin{align}{\label{disc}}
    D_F = (-1)^{nm(nm-2m+n)/2}(-b)^{m(n-2)}[b(-n)^n+4 (n-2)^{n-2}(a/2)^n  ]^m m^{mn}f(c)^{m-1},
\end{align}
where $f(c) = c^n + ac^{n-1}+dc^{n-2} + b.$
\\

Specifically, we establish the following result.
\begin{Theorem}{\label{main thm}}
    Let $K = Q(\theta)$ be an algebraic number field with $\theta$ in the ring $\mathcal{O}_K$ of
algebraic integers of $K$ having minimal polynomial $F(x) = f\circ g (x)=  (x^m +c)^n + a(x^m +c)^{n-1} +d(x^m +c)^{n-2}+b$
over $\mathbb{Q}$, where $a^2=4d$, $n\geq 3,\; m\geq 2$.
A prime divisor of
the discriminant $D_F$ of $F(x)$ does not divide $[\mathcal{O}_K : \mathbb{Z}[\theta]]$ if and only if $p$ satisfies one of the
following conditions:
\begin{enumerate}
    \item[(i)] when $  p\mid b$ then $p^2\nmid b$. 
     \item [(ii)]
    when $p \nmid b$ and $p\mid m$ with $m=p^j s; j\geq 1, p\nmid s$, then the polynomials $(x^s +c)^n +a(x^s +c)^{n-1} +d(x^s+c)^{n-2} +b$ and $\frac{1}{p} n(x^s+c)^{p^j (n-1)}t(x) + \frac{1}{p} \{(-1)^{p^j} (a(x^s+c)^{n-1} + d (x^s+c)^{n-2} + b)^{p^j}  + a(x^m+c)^{n-1} +d(x^m+c)^{n-2} +b \}$ are co-prime modulo $p$ where $t(x)= \sum_{i=1}^{p^j}{{p^j}\choose {i}} (x^s+c)^{p^j-i} (-c)^i +c$.
\item[(iii)] when $p\nmid bm$ and $p\mid d$ with $n=sp^k$,  $p \nmid s $ then 
\begin{enumerate}
    \item [(a)] if $k\geq 1$ then either $p\mid a_1$, $p\nmid b_1$ or $p\nmid a_1((-b_1)^n - a_1^n(-b)^{n-1})$, where $ b_1 = \frac{b+(-b)^{p^k}}{p}$ and $a_1 = \frac{a}{p}$.
    \item [(b)] if $k=0$ then $p^2 \nmid  f(c)$.
\end{enumerate}

\item[(iv)]  when  $p=2$ and $2\nmid dbm$ then $b \equiv 1(mod\; 4)$
\item[(v)] when $p\nmid abm$ 
then \( p^2 \nmid  f(c) \) and \( p^2 \nmid  \{(-n)^nb +4 (n-2)^{n-2}(a/2)^n\} \).
\end{enumerate}
\end{Theorem}

The corollary below directly follows from the theorem stated above.

\begin{corollary}{\label{c1}}
Let \( K = \mathbb{Q}(\theta) \), and consider the polynomial  
$
F(x) = (x^m + c)^n + a(x^m + c)^{n-1} + d(x^m + c)^{n-2} + b,$
as defined in Theorem \ref{main thm}.  
Then the ring of integers \( \mathcal{O}_K \) equals \( \mathbb{Z}[\theta] \) 
if and only if every prime \( p \) dividing the discriminant \( D_F \) 
satisfies one of the conditions \( (i) \text{--} (v) \) listed in Theorem \ref{main thm}.
\end{corollary}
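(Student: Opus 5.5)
The approach is to apply Dedekind's criterion prime by prime to each $p \mid D_F$, using the factorization of $D_F$ in \eqref{disc}. The primes dividing $D_F$ split into those dividing $b$, those dividing $m$, those dividing $d$ (equivalently $a$, since $a^2=4d$; $p=2$ needs separate care), and those dividing $f(c)$ or $(-n)^nb+4(n-2)^{n-2}(a/2)^n$. For each case I will reduce $F$ modulo $p$ and factor it as $\bar F=\prod \bar g_i^{e_i}$. I then write $F=\prod g_i^{e_i}+pM(x)$ with monic lifts $g_i$, and test whether $\gcd(\bar M,\prod_{e_i\ge 2}\bar g_i)=1$ in $\mathbb{F}_p[x]$. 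The cases follow the order (i)--(v) of the statement.

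\textbf{Case $p\mid b$.} Here $F\equiv (x^m+c)^{n-2}\bigl((x^m+c)+a/2\bigr)^2 \pmod p$, since $a^2=4d$ gives $y^n+ay^{n-1}+dy^{n-2}=y^{n-2}(y+a/2)^2$. If $p\nmid c$ the repeated factors are the irreducible factors of $x^m+c$ and $x^m+c+a/2$. Taking the natural lift, $M=b/p$, so the remainder is nonzero at every repeated factor exactly when $p^2\nmid b$. If $p\mid c$ or $p\mid m$ the factors become powers of $x$ or $p$-th powers. Even then, I would show the remainder is still essentially the constant $b/p$ plus multiples of the repeated factors, so condition (i) is the answer in every subcase.

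\textbf{Case $p\nmid b$, $p\mid m$.} Write $m=p^js$. Then $x^m+c\equiv (x^s+c)^{p^j}$ up to the Frobenius twist of $c$. So $\bar F\equiv \bar h(x)^{p^j}$, where $h=(x^s+c)^n+a(x^s+c)^{n-1}+d(x^s+c)^{n-2}+b$, with $c$ replaced suitably (note $c^{p^j}\equiv c$). Expanding $F-h^{p^j}$ and dividing by $p$, with the binomial expansion producing the polynomial $t(x)$, gives exactly the second polynomial in (ii). Dedekind's condition then becomes coprimality with $\bar h$, which is (ii).

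\textbf{Case $p\nmid bm$, $p\mid d$ (hence $p\mid a$).} Put $y=x^m+c$. Then $F\equiv y^n+b$, and the shape of $y^n+b$ mod $p$ depends on $n=sp^k$. If $k\ge1$, then $y^n+b\equiv (y^s+b')^{p^k}$ with $b'\equiv b$ by Fermat. Lifting $x^m+c$ and computing the remainder uses $b_1=(b+(-b)^{p^k})/p$ and $a_1=a/p$. Evaluating at a root of $y^s-(-b)$, and taking resultants to get a condition free of roots, should yield the dichotomy in (iii)(a). This resultant manipulation is where I expect the main obstacle: the expression $(-b_1)^n-a_1^n(-b)^{n-1}$ must emerge from eliminating the root. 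If $k=0$, then $y^n+b$ is separable in $y$, and the only repeated factors come from $x^m+c$ having multiple roots over the roots $y_0$. Since $p\nmid m$, this happens only at $x=0$ with $y_0=c$, i.e.\ when $p\mid f(c)$, and the remainder test gives $p^2\nmid f(c)$.

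\textbf{Case $p=2\nmid dbm$, and case $p\nmid abm$.} For $p=2$ with $2\nmid d$ we have $a$ odd; but $a^2=4d$ forces $a$ even, so this case needs $d$ odd and $a\equiv 2 \pmod 4$. Reducing, $F\equiv y^{n-2}(y+1)^2+1$, and a direct computation of the remainder gives the congruence $b\equiv1 \pmod 4$. For $p\nmid abm$, repeated roots of $\bar F$ come either from repeated roots of $\bar f$ or from $x=0$ when $\bar f(c)=0$. The former occur at the simple critical value $-a/2+a/n$, giving a double root. The standard argument then applies: a double root $\alpha$ of $\bar f$ lifts, and the Dedekind remainder is nonzero iff $p^2\nmid$ the corresponding discriminant factor. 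This gives $p^2\nmid (-n)^nb+4(n-2)^{n-2}(a/2)^n$, and likewise $p^2\nmid f(c)$ for the root at $x=0$ coming from $g'$.

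In each case both implications of Dedekind's criterion are used, so the conditions are necessary and sufficient. The corollary then follows immediately.
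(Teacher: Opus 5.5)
\textbf{There is a genuine gap: one asserted step is false for $n=3$, and so is the statement itself.}

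The paper gets this corollary in one line. Theorem~\ref{main thm} decides, for each prime $p\mid D_F$, whether $p$ divides $[\mathcal{O}_K:\mathbb{Z}[\theta]]$. The relation $D_F=[\mathcal{O}_K:\mathbb{Z}[\theta]]^2d_K$ shows no other prime can divide the index; your last sentence uses this without saying so. You instead redo the Dedekind analysis behind the theorem, which is the paper's own method there, but the decisive steps are only promised.

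The failing step is the case $p\mid b$ with $p\mid c$ or $p\mid m$. Your claim that the remainder is ``$b/p$ plus multiples of the repeated factors'' holds for $n\ge 4$. There $(x^m+c)^{n-2}(x^m+c+a/2)^2\in\langle p,h\rangle^2$ for every irreducible factor $h$. For $n=3$ and $p\nmid d$ it can fail. Then $x^m+\bar c$ is coprime to $x^m+\bar c+\bar a/2$. So a repeated factor $h$ of $x^m+\bar c$ enters $(x^m+c)(x^m+c+a/2)^2$ only through $x^m+c$, which need not lie in $\langle p,h\rangle^2$.

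Concretely, take $n=3$, $a=2$, $d=1$, $b=6$, $m=2$, $c=3$, so $F=x^6+11x^4+40x^2+54$.
\begin{itemize}
\item $F$ is irreducible: $f$ has no rational root, and for a root $\beta$ of $f$, $N(\beta-3)=-f(3)=-54<0$, so $\beta-3$ is not a square in $\mathbb{Q}(\beta)$.
\item Over $\mathbb{F}_3$, $\overline F=x^2(x^2+1)^2$ and $M=(F-x^2(x^2+1)^2)/3=3x^4+13x^2+18\equiv x^2$. So $3$ divides the index although $9\nmid b$. Equivalently, $9\mid F(0)=f(c)$ puts $F$ in $\langle 3,x\rangle^2$.
\item $D_F=\pm 948^2\cdot 2^6\cdot 54$ has prime divisors $2,3,79$. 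Here $2$ and $3$ satisfy (i) and $79$ satisfies (v), so the corollary itself fails.
\end{itemize}
The paper inherits this from Case~i, Sub-case~2 of its proof of Theorem~\ref{main thm}, where $H^{n-2}G^2\in\langle p,h_i\rangle^2$ is asserted ``since $n\ge 3$''. One must either assume $n\ge 4$ or add a condition in that subcase, e.g.\ $p^2\nmid f(c)$ when $p\mid c$.

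\textbf{Two further steps are only asserted.}

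In (iii)(a), note first that $p\mid a$ and $a^2=4d$ give $p^2\mid d$ (also for $p=2$); this is why the $d$-term drops. The test is then whether $\bar a_1Y^{n-1}+\bar b_1$ vanishes at a root of $Y^s+\bar b$, with $Y=x^m+c$. No resultant is needed. The case $p\mid a_1$ is immediate. If $p\nmid a_1$, a common root satisfies $Y_0^n=(-\bar b)^{p^k}=-\bar b$ and $Y_0^{n-1}=-\bar b_1/\bar a_1$. This is impossible if $\bar b_1=0$, and otherwise forces $Y_0=\bar b\,\bar a_1/\bar b_1\in\mathbb{F}_p$. Conversely, this $Y_0$ is a common root exactly when $(-\bar b_1)^n=\bar a_1^n(-\bar b)^{n-1}$; use $Y_0^{p^k}=Y_0$ to get $Y_0^s=Y_0^n$.

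In (iv), odd $n$ gives $2\nmid D_F$. For $n=2s$, write $F=h^2-2h(dY^{s-1}+b)+R$ with $h=Y^s+dY^{s-1}+b$. Since $d=(a/2)^2\equiv 1\pmod 4$, one gets $R/2\equiv Y^{s-1}\bar h+(b+1)/2\pmod 2$. This is coprime to $\bar h$ if and only if $b\equiv 1\pmod 4$.
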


\begin{corollary}{\label{c2}}
Let \( K = \mathbb{Q}(\theta) \), and let the polynomial \( F(x) \) be as given in Theorem \ref{main thm}.  
Assume that \( \operatorname{rad}(m) \mid b \), \( \gcd(d, n) = 1 \), and \( b \equiv 1 \pmod{4} \).  
Then the ring of integers \( \mathcal{O}_K \) equal to \( \mathbb{Z}[\theta] \) if and only if every prime \( p \) dividing the discriminant \( D_F \) satisfies one of the following conditions:  

(i) \( p \mid b \) and \( p^2 \nmid b \); or  

(ii) \( p \nmid b \), and both 
\( p^2 \nmid \{(-n)^n b + 4 (n-2)^{n-2}(a/2)^n\} \) 
and \( p^2 \nmid f(c) \).
\end{corollary}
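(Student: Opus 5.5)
The plan is to derive Corollary \ref{c2} from Corollary \ref{c1}. I would fix a prime $p \mid D_F$ and show that, under the hypotheses $\operatorname{rad}(m)\mid b$, $\gcd(d,n)=1$, $b\equiv 1 \pmod 4$, whichever of the conditions (i)--(v) of Theorem \ref{main thm} applies to $p$ reduces to condition (i) or (ii) of the corollary. Throughout I would use two observations: $a^2=4d$ forces $a$ to be even, say $a=2a'$ with $d=a'^2$; and for odd $p$, $p\mid d \iff p\mid a$.

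\emph{Case $p\mid b$.} Condition (i) of the theorem is literally condition (i) of the corollary.

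\emph{Case $p\nmid b$.} Since $\operatorname{rad}(m)\mid b$, we get $p\nmid m$, so case (ii) of the theorem never occurs.
\begin{itemize}
\item[$\bullet$] If $p\mid d$, then $p\nmid n$ because $\gcd(d,n)=1$. Hence $k=0$, and (iii)(b) says exactly $p^2\nmid f(c)$. To match corollary (ii) I must also show $p^2\nmid (-n)^nb+4(n-2)^{n-2}(a/2)^n$. In fact this quantity is not even divisible by $p$.
\begin{itemize}
\item[--] For odd $p$ we have $p\mid a$, so it is $\equiv (-n)^n b \not\equiv 0 \pmod p$.
\item[--] For $p=2$, $2\mid d=a'^2$ gives $2\mid a/2$, so it is $\equiv n^n b \pmod 2$, which is odd since $n$ is odd.
\end{itemize}
\item[$\bullet$] If $p\nmid d$ and $p$ is odd, then $p\nmid a$. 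So $p\nmid abm$ and condition (v) coincides verbatim with corollary (ii).
\item[$\bullet$] If $p\nmid d$ and $p=2$, we are in case (iv). The condition $b\equiv 1\pmod 4$ holds by hypothesis, so $2\nmid[\mathcal{O}_K:\mathbb{Z}[\theta]]$ automatically. It remains to check that corollary (ii) also holds for $p=2$ in this situation.
\begin{itemize}
\item[--] Since $d$ is odd, $a/2=a'$ is odd.
\item[--] A parity check on $c$ shows $f(c)$ is odd: for $c$ even use $n\geq 3$; for $c$ odd, $f(c)$ is a sum of one even and three odd terms.
\item[--] When $n$ is odd, $(-n)^nb+4(n-2)^{n-2}a'^n$ is odd.
\end{itemize}
\end{itemize}

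The main obstacle is this last subcase with $n$ even, where the quantity $(-n)^nb+4(n-2)^{n-2}a'^n$ is divisible by $4$. Corollary (ii) would then fail while Theorem \ref{main thm}(iv) says $2$ is harmless. Here I would go back to the Dedekind computation for $p=2$ behind (iv) to see whether this situation is excluded, or whether the corollary should be read with $p=2$ handled by (iv). This is the one point where the reduction is not purely formal. All other cases follow from the case analysis above together with Corollary \ref{c1}.
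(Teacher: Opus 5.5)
Your case analysis is correct in every subcase you settle. It is exactly the specialization of Theorem~\ref{main thm} the paper has in mind; the paper gives no separate argument for this corollary. The reduction goes as follows: $\operatorname{rad}(m)\mid b$ removes case (ii), $\gcd(d,n)=1$ removes (iii)(a), and $b\equiv 1\pmod 4$ makes (iv) automatic. Cases (iii)(b) and (v) then merge into condition (ii), via your observation that $p\nmid \Delta:=(-n)^nb+4(n-2)^{n-2}(a/2)^n$ when $p\mid d$. What is missing is a verdict on the subcase you flagged. That verdict is negative: you cannot close it, because the statement as written is false there.

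\textbf{Why the statement fails for even $n$.} The hypothesis $b\equiv1\pmod 4$ forces $b$ odd, hence $m$ odd. For even $n$, $\gcd(d,n)=1$ then forces $d$ odd. So every even $n$ lands in your problematic subcase, and nothing excludes it. For even $n\ge 4$ one has $16\mid\Delta$, so $2\mid D_F$, and $2$ satisfies neither (i) nor (ii).

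Yet the Dedekind computation behind (iv) is sound. Put $y=x^m+c$, $n=2s$, $d=2D+1$, $b=2B+1$. At every root of $\bar h=y^s+y^{s-1}+1$ one has $y^{s-1}(y+1)=1$. Also $d=(a/2)^2\equiv1\pmod 8$ makes $D$ even. So the paper's polynomial $G$ reduces there to $y^{s-1}\cdot y^{s-1}(y+1)+y^{s-1}+B+1=B+1$. Hence $2\nmid[\mathcal{O}_K:\mathbb{Z}[\theta]]$ exactly when $b\equiv 1\pmod 4$, which is assumed.

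\textbf{A concrete counterexample.} Take $a=2$, $d=1$, $n=4$, $m=3$, $c=3$, $b=21$. This gives $F(x)=x^{12}+14x^9+73x^6+168x^3+165$, which is irreducible:
\begin{itemize}
\item $f(y)=(y^2+y)^2+21$ is irreducible, since $N(1+4\sqrt{-21})=337$ is not a square. So every rational factor of $F=f(x^3+3)$ has degree divisible by $4$.
\item The $3$-adic Newton polygons of $F(x)$ and $F(x-1)$ show that $F$ is a product of two irreducible sextics over $\mathbb{Q}_3$.
\end{itemize}
Here $D_F=\pm 21^{6}(2^4\cdot 337)^{3}3^{12}165^{2}$. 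The primes $3,7$ satisfy (i), and the primes $5,11,337$ satisfy (ii). At $p=2$ we have $\bar F=(x^6+x^3+1)^2$, and $\bar M=x^6+x^3$ is coprime to $x^6+x^3+1$. So $\mathcal{O}_K=\mathbb{Z}[\theta]$ although $2$ violates (ii).

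\textbf{Conclusion and repair.} The `only if' direction fails for every even $n$. The `if' direction holds, and your analysis proves it. Your second option is the correct repair: exempt $p=2$ when $d$ is odd (equivalently, when $n$ is even). In that case (iv) together with $b\equiv1\pmod4$ already gives $2\nmid[\mathcal{O}_K:\mathbb{Z}[\theta]]$. With that amendment your argument is a complete proof.
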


Consider the polynomial \( F(x)\) and the assumptions as in corollary \ref{c2}. 
Let \( q \) be a prime dividing \(\gcd(b,c) \) such that \( q^2 \nmid b \). For \( j \geq 0 \), define \( F_{q,j} = F(x^{q^j}) \). Then, for all \( j \geq 0 \), the polynomial \( F_{q,j} \) is \( q \)-Eisenstein and
the discriminant $D_{F_{q,j}}$ of the polynomial $F_{q,j}$ satisfies 
$$
|D_{F_{q,j}}| = \bigg|[(-b)^{n-2}\{(-n)^nb +4 (n-2)^{n-2}(a/2)^n\} ]^{mq^j} (q^jm)^{mnq^j} f(c)^{mq^j -1} \bigg|.
$$
\noindent
By employing the formula for \( |D_{F_{q,j}}| \) together with Corollary \ref{c2} and Theorem \ref{main thm}, 
we deduce that the composition of a certain class of polynomials yields a monogenic polynomial.
\begin{corollary}{\label{c3}}
Consider the polynomial \( F(x) \) and the assumptions stated in Corollary \ref{c2}.  
Let \( q \) be a prime dividing \( \gcd(b, c) \) such that \( q^2 \nmid b \).  
Then the polynomial \( F_{q,j} \) (as defined above) is \( q \)-Eisenstein for all \( j \geq 0 \).  
Moreover, \( F_{q,j}(x) \) is monogenic for every \( j \geq 0 \) if and only if each prime \( p \) dividing \( D_F \) 
satisfies one of the following conditions:  

(i) \( p \mid b \) and \( p^2 \nmid b \); or  

(ii) \( p \nmid b \), and both  
\( p^2 \nmid \{(-n)^n b + 4 (n-2)^{n-2}(a/2)^n\} \)  
and \( p^2 \nmid f(c) \).
\end{corollary}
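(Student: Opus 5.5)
The plan is to reduce the statement to Corollary~\ref{c2} applied to a member of the same family with a larger exponent. The key observation is
\[
F_{q,j}(x)=F(x^{q^j})=\bigl((x^{mq^j}+c)^n+a(x^{mq^j}+c)^{n-1}+d(x^{mq^j}+c)^{n-2}+b\bigr).
\]
So $F_{q,j}=f\circ g_j$ with $g_j(x)=x^{m_j}+c$ and $m_j=mq^j\ge 2$. Since $f$ is unchanged, $F_{q,j}$ belongs to the family of Theorem~\ref{main thm} with $m$ replaced by $m_j$ and the same $a,b,c,d,n$. I will carry out three steps in order: (1) irreducibility via the Eisenstein property, so that Theorem~\ref{main thm} and its corollaries apply to $F_{q,j}$; (2) checking the hypotheses of Corollary~\ref{c2} for $m_j$; (3) comparing the prime divisors of $D_{F_{q,j}}$ with those of $D_F$.

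\textbf{Step 1 (Eisenstein).} I will expand $F_{q,j}$ modulo $q$ using $q\mid c$ and $q\mid b$. This shows that every non-leading coefficient is divisible by $q$, provided the terms coming from $a(x^{m_j}+c)^{n-1}$ and $d(x^{m_j}+c)^{n-2}$ also vanish modulo $q$. For the constant term $f(c)=c^n+ac^{n-1}+dc^{n-2}+b$, I will use $n\ge 3$ and $q\mid c$ to get $f(c)\equiv dc^{n-2}+b \pmod{q^2}$, and then $q^2\nmid b$ gives $q^2\nmid f(c)$.

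I expect this step to be the main obstacle. The reduction $F_{q,j}\equiv x^{m_jn}+ax^{m_j(n-1)}+dx^{m_j(n-2)}\pmod q$ shows that the Eisenstein property genuinely needs $q\mid a$, and hence $q\mid d=a^2/4$. The constant-term claim also needs $q^2\mid dc^{n-2}$, which holds when $n\ge 4$ or $q\mid d$. So my first attempt will be to extract $q\mid a$ from the standing assumptions. Failing that, I will state explicitly that this divisibility is required for the Eisenstein claim. Once the Eisenstein property is established, irreducibility of $F_{q,j}$ follows.

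\textbf{Step 2 (hypotheses of Corollary~\ref{c2}).} The conditions $\gcd(d,n)=1$ and $b\equiv 1\pmod 4$ do not involve $m$, so they carry over. For the remaining one, $\operatorname{rad}(m_j)=\operatorname{rad}(m)\cdot q$ or $\operatorname{rad}(m)$, and this divides $b$ because $\operatorname{rad}(m)\mid b$ and $q\mid b$. Corollary~\ref{c2} then says that $F_{q,j}$ is monogenic if and only if every prime $p\mid D_{F_{q,j}}$ satisfies (i) or (ii). Both conditions involve only $b$, $(-n)^nb+4(n-2)^{n-2}(a/2)^n$ and $f(c)$, none of which depends on $j$.

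\textbf{Step 3 (prime sets).} From the displayed formula for $|D_{F_{q,j}}|$ and formula (\ref{disc}), the primes dividing $D_{F_{q,j}}$ are those dividing
\[
b\bigl((-n)^nb+4(n-2)^{n-2}(a/2)^n\bigr)\, m\, f(c),
\]
together with possibly $q$. Since $q\mid b$ and $n\ge 3$, the factor $(-b)^{m(n-2)}$ already puts $q$ among the prime divisors of $D_F$. Hence the two discriminants have exactly the same prime divisors for every $j\ge 0$. Moreover $q$ itself satisfies (i), because $q\mid b$ and $q^2\nmid b$.

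Combining the three steps, the criterion from Corollary~\ref{c2} for $F_{q,j}$ is literally the same condition for every $j$, namely the stated condition on the primes dividing $D_F$. This gives the claimed equivalence for all $j\ge 0$ simultaneously.
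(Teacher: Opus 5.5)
Your route is the paper's own. The paper's only justification is the remark before the corollary: $F_{q,j}$ is the same family with $m$ replaced by $mq^j$, $D_{F_{q,j}}$ has the same prime divisors as $D_F$ because $q\mid b$, and Corollary~\ref{c2} then gives a criterion not involving $j$. Your Steps~2 and~3 supply exactly the details it omits.

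Your worry in Step~1 is justified and cannot be removed using the hypotheses. Modulo $q$ one has $F_{q,j}\equiv x^{m_jn}+ax^{m_j(n-1)}+dx^{m_j(n-2)}$, and nothing forces $q\mid a$. For example, $a=6$, $d=9$, $n=4$, $m=3$, $b=c=21$, $q=7$ satisfies every hypothesis, and $F$ is irreducible (it is $3$-Eisenstein). Yet $F\equiv x^{12}+6x^{9}+2x^{6}\pmod 7$. So the statement needs the extra hypothesis $q\mid a$ (equivalently $q\mid d$, since $q$ is odd because $b$ is odd). This matters beyond the Eisenstein claim. The Eisenstein property is your only source of irreducibility of $F_{q,j}$, and you need that before Corollary~\ref{c2} or Theorem~\ref{main thm} can be applied to it. With $q\mid a$ your Step~1 works, since then $f(c)\equiv b\pmod{q^2}$.

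A second defect, which you do not flag, enters through your black-box use of Corollary~\ref{c2}. Under its hypotheses $b$ and $m$ are odd, and if $n$ is even then $d$ is odd because $\gcd(d,n)=1$. Then $16\mid(-n)^nb+4(n-2)^{n-2}(a/2)^n$, so $p=2$ divides $D_F$ and satisfies neither (i) nor (ii). However, $p=2$ falls under Theorem~\ref{main thm}(iv), and since $b\equiv 1\pmod 4$ it is not an index divisor.

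Concretely, take $a=6$, $d=9$, $n=4$, $m=3$, $b=21$, $c=q=3$. Then $(-n)^nb+4(n-2)^{n-2}(a/2)^n=2^4\cdot 3\cdot 139$ and $f(c)=3\cdot 5\cdot 23$. The prime divisors $2,3,5,7,23,139$ of $D_{F_{3,j}}$ fall under cases (iv), (i), (v), (i), (v), (v) of Theorem~\ref{main thm}, and all pass. So $F_{3,j}$ is monogenic for every $j$, although the right-hand side of the corollary fails at $p=2$. Hence the ``only if'' direction is false for even $n$, both in Corollary~\ref{c2} and here.

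The repair is to run your $j$-independence argument through Theorem~\ref{main thm} directly. Since $\operatorname{rad}(mq^j)\mid b$, a prime $p\nmid b$ never divides $mq^j$. So each prime divisor of $D_{F_{q,j}}$ lands in case (i), (iii)(b), (iv) or (v) for every $j$, and none of those conditions involves $m$. This gives the correct criterion: the stated one, with $p=2$ exempted when $n$ is even.
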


\begin{corollary}{\label{analytic c}}
 Let $f(x) = x^n+ax^{n-1}+d x^{n-2}+b $ and be such that \(n\ge 3\). Let \(K=\mathbb{Q}(\theta)\) with \(\theta\) having the minimal polynomial
\[
F(x)=(x^m+c)^n+a(x^m+c)^{n-1}+d (x^m +c)^{n-2}+ b\in \mathbb{Z}[x]  
\]
with $a^2 =4d$, \(m\ge 2\) and \(\operatorname{rad}(m)\mid b\). Suppose \(f(x)\) and \(F(x)\) are irreducible. Then \(f(x)\) and \(F(x)\) are monogenic if

\begin{enumerate}
    \item 
$
\text{If } b \text{ is square-free, then for every prime } p\nmid b,
$
$
\nu_p\left((-n)^n b+4(n-2)^{n-2}\left(\frac{a}{2}\right)^n\right)\leq 1$ and 
$\nu_p(f(c))\leq 1$.
    
    \item If $2\nmid dm$ then $b\equiv 1 \pmod 4$.
    \item For each prime $p$ such that $p\mid \gcd(d,n)$, we have, 
$(-b)^p \not\equiv -b \pmod{p^2}$
 when $p^2 \mid a$, and $(-b)^p \equiv -b \pmod {p^2}$ when $p \mid\mid a$.
\end{enumerate}

\end{corollary}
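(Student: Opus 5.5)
The plan is to derive Corollary \ref{analytic c} from Theorem \ref{main thm} and Corollary \ref{c1}. For $F$ we check, prime by prime, that every prime $p\mid D_F$ satisfies one of (i)--(v). For $f$ we use the analogous criterion of Jakhar \cite{jakharrocky} for the quadrinomial $x^n+ax^{n-1}+dx^{n-2}+b$ with $a^2=4d$. Alternatively, the $f$-conditions are the $m=1$ specialisation of the same Dedekind computations, since $D_f$ divides $D_F$ up to the factors $m^{mn}f(c)^{m-1}$. By \eqref{disc}, the primes dividing $D_F$ divide $b$, $m$, $f(c)$, or $B:=(-n)^nb+4(n-2)^{n-2}(a/2)^n$. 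We split the cases exactly as in the theorem.

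\emph{Primes $p\mid b$.} Hypothesis (1) gives $b$ squarefree, so $p^2\nmid b$ and condition (i) holds. Since $\operatorname{rad}(m)\mid b$, every prime dividing $m$ lies in this case, so condition (ii) of the theorem never has to be checked. This is the main reason for the hypothesis $\operatorname{rad}(m)\mid b$.

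\emph{Primes $p\nmid bm$.}
\begin{itemize}
\item If $p=2$ and $2\nmid d$, hypothesis (2) gives $b\equiv1\pmod 4$, which is (iv).
\item If $p\nmid ab$, hypothesis (1) gives $\nu_p(B)\le1$ and $\nu_p(f(c))\le1$, which is (v). Note that $p\nmid a$ forces $p\nmid d$ for odd $p$, since $a^2=4d$.
\item If $p\mid d$ and $p\nmid n$ ($k=0$), hypothesis (1) gives $p^2\nmid f(c)$, which is (iii)(b).
\end{itemize}

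\emph{Primes $p\mid\gcd(d,n)$, $p\nmid bm$.} This is condition (iii)(a), and it is the main obstacle. Here $n=sp^k$ with $k\ge1$, $a_1=a/p$, and $b_1=(b+(-b)^{p^k})/p$. We need either $p\mid a_1$ with $p\nmid b_1$, or $p\nmid a_1\big((-b_1)^n-a_1^n(-b)^{n-1}\big)$.
\begin{itemize}
\item First reduce $b_1$ modulo $p$. By Fermat and the standard lifting $x^{p^k}\equiv x^{p}\pmod{p^2}$ when $x^p\equiv x\pmod p$ (iterate, since $(x^{p})^{p}\equiv x^{p}$ and $p$-powering preserves congruences mod $p^2$ upward), we get $b+(-b)^{p^k}\equiv b+(-b)^p\pmod{p^2}$. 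Hence $p\mid b_1$ if and only if $(-b)^p\equiv -b\pmod{p^2}$.
\item If $p^2\mid a$, then $p\mid a_1$, and hypothesis (3) gives exactly $p\nmid b_1$.
\item If $p\,\|\,a$, then $p\nmid a_1$, and hypothesis (3) gives $p\mid b_1$. Since $n\ge1$, $(-b_1)^n\equiv0\pmod p$, so $(-b_1)^n-a_1^n(-b)^{n-1}\equiv -a_1^n(-b)^{n-1}\not\equiv0\pmod p$, because $p\nmid a_1b$. So the second alternative holds.
\end{itemize}
Note that $p\mid d$ with $a^2=4d$ and $p$ odd gives $p\mid a$. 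For $p=2$, if $2\mid d$ then $4\mid a^2/ \gcd$ issues must be checked, but $2\mid d$ forces $a^2=4d\equiv0\pmod 8$, hence $4\mid a$, so the case $p\,\|\,a$ never occurs for $p=2$ and (3) is consistent.

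The remaining subtle point is the case $p\mid a$, $p\nmid d$, $p\nmid bm$. This is possible only for $p=2$ with $d$ odd, where $a=2\sqrt d$ and $2\,\|\,a$. That prime is handled by (iv) via hypothesis (2). So every prime dividing $D_F$ is covered, and Corollary \ref{c1} gives $\mathcal O_K=\mathbb Z[\theta]$. The same bookkeeping, with $m=1$ and without the $f(c)$ factor, covers $D_f$ via Jakhar's theorem, so $f$ is monogenic as well.
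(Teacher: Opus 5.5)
Your proposal is correct and follows the paper's proof essentially step for step. It checks each prime dividing $D_F$ against conditions (i)--(v) of Theorem \ref{main thm} using hypotheses (1)--(3), uses $\operatorname{rad}(m)\mid b$ to rule out (ii), and handles $f$ through the corresponding quadrinomial criterion (the $m=1$ analogue). Your explicit proof that $(-b)^{p^k}\equiv(-b)^p \pmod{p^2}$, and your check that $p\,\|\,a$ cannot occur when $p=2$ and $2\mid d$, fill in small steps the paper leaves implicit.
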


\begin{Theorem}{\label{t2}}
    Let \begin{align}{\label{diff}}
    \bigg( \frac{d^m }{dx^m} +c \bigg )^n y+ a\bigg( \frac{d^m }{dx^m} +c \bigg )^{n-1} y+d\bigg( \frac{d^m }{dx^m} +c \bigg )^
    {n-2}y+by=0 
    \end{align}
    be a differential equation with $a^2=4d$, where $n\geq 3,\; m\geq 2$. Let $\mathcal{F} =  (z^m + c)^n + a(z^m + c)^{n-1}+ d(z^m + c)^{n-2} + b$ be the irreducible auxiliary equation of \eqref{diff}
  with a root $\theta$ such that $K(\theta)$ is the splitting field of $\mathcal{F}$. If for each prime $p$ dividing the discriminant $D_{\mathcal{F}}$
 of $\mathcal{F}(z)$ satisfies any one of the conditions (i) to (v) of Theorem \ref{main thm}, then the general
 solution of the given differential equation \eqref{diff} is of the form
\begin{align*}
    y(x) = \sum_{i=1}^{mn} \alpha_i\prod_{j=1}^{mn}e^{c_{j-1}^{(i)} \theta^{j-1}x},
\end{align*}
where $c_{j-1}^{(i)}$ are integers and $\alpha_i$ are arbitrary real constants, for all $1\leq i,j \leq mn$.
\end{Theorem}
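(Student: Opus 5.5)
The plan is to reduce the differential equation to a linear constant-coefficient equation and then use Corollary \ref{c1} to describe its roots in terms of the power basis. Write $D=\frac{d}{dx}$. The operator in \eqref{diff} is exactly $\mathcal{F}(D)$, where $\mathcal{F}(z)$ is the auxiliary polynomial of degree $mn$. By the standard theory of linear ODEs with constant coefficients, the solution space has dimension $mn$.

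First I will check that $\mathcal{F}$ has $mn$ distinct roots $\theta_1=\theta,\theta_2,\dots,\theta_{mn}$. This holds because $\mathcal{F}$ is irreducible over $\mathbb{Q}$, hence separable. It follows that $\{e^{\theta_i x}\}_{i=1}^{mn}$ is a fundamental system of solutions, and the general solution is $y(x)=\sum_{i=1}^{mn}\alpha_i e^{\theta_i x}$.

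Next I will apply the hypothesis. Every prime dividing $D_{\mathcal{F}}$ satisfies one of (i)--(v), so Corollary \ref{c1} (equivalently Theorem \ref{main thm}) gives $\mathcal{O}_K=\mathbb{Z}[\theta]$. Since $K=\mathbb{Q}(\theta)$ is the splitting field, each conjugate $\theta_i$ lies in $K$. Each $\theta_i$ is also an algebraic integer, being a root of the monic integer polynomial $\mathcal{F}$, so $\theta_i\in\mathcal{O}_K=\mathbb{Z}[\theta]$. Because $\{1,\theta,\dots,\theta^{mn-1}\}$ is a $\mathbb{Z}$-basis of $\mathcal{O}_K$, there are unique integers $c^{(i)}_{0},\dots,c^{(i)}_{mn-1}$ with
\[
\theta_i=\sum_{j=1}^{mn} c^{(i)}_{j-1}\theta^{j-1}.
\]

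Finally I will substitute this into the exponential:
\[
e^{\theta_i x}=\exp\Bigl(\sum_{j=1}^{mn} c^{(i)}_{j-1}\theta^{j-1}x\Bigr)=\prod_{j=1}^{mn}e^{c^{(i)}_{j-1}\theta^{j-1}x}.
\]
Summing with the constants $\alpha_i$ gives the stated form. For real solutions, one takes suitable combinations of the constants when roots are complex; I will treat the $\alpha_i$ as arbitrary constants, as the statement does.

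There is no real obstacle here. The only point needing care is the step $\theta_i\in\mathbb{Z}[\theta]$. It uses both the splitting-field hypothesis, which puts $\theta_i$ in $K$, and integrality, which puts $\theta_i$ in $\mathcal{O}_K$, before monogenity can be applied.
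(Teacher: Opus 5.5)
Your proposal is correct and follows the same route as the paper. You obtain $\mathcal{O}_K=\mathbb{Z}[\theta]$ from Theorem~\ref{main thm} (via $D_{\mathcal{F}}=[\mathcal{O}_K:\mathbb{Z}[\theta]]^2d_K$), write each root of $\mathcal{F}$ in the power basis $\{1,\theta,\dots,\theta^{mn-1}\}$, and split each $e^{\theta_i x}$ into a product of exponentials; you also make explicit what the paper leaves implicit, namely that $\mathcal{F}$ is separable and that each conjugate $\theta_i$ lies in $\mathcal{O}_K$ because $\mathbb{Q}(\theta)$ is the splitting field and $\theta_i$ is integral, which is exactly where the splitting-field hypothesis is needed.
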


In recent years, several quantitative results on monogenic polynomials have been obtained in the literature. Kedlaya \cite{Kedlaya} constructed infinitely many monogenic polynomials of every degree greater than one with square-free discriminant. Later, Jones \cite{JonesNonSquarefree} produced infinite families of monogenic polynomials with non-square-free discriminants, and Barman, Narode and Wagh \cite{BarmanNarodeWagh} established further such families by related methods. Motivated by these developments, we provide a quantitative consequence of Corollary \ref{analytic c}.  More precisely, under the assumption of the abc-conjecture, we obtain a lower bound for the number of pairs of polynomials
\[
f(x)=x^{n}+ax^{n-1}+dx^{n-2}+b
\quad \text{and} \quad
F(x)=(x^{m}+c)^{n}+a(x^{m}+c)^{n-1}+d(x^{m}+c)^{n-2}+b
\]
for which both \(f(x)\) and \(F(x)\) are monogenic.

\vspace{2mm}

Suppose $d\neq \pm 1$ and let $\ell$ be a prime divisor of $d$ such that $\ell\nmid n$, and define $\rho=\operatorname{rad}(\gcd(d,n)), ~
\kappa=\operatorname{rad}(m\ell).$
Assume that $\gcd(\kappa,\rho)=1$, and set
\[
\delta=
\begin{cases}
1, & \text{if }2\mid dm,\\[1mm]
4, & \text{if }2\nmid dm.
\end{cases}
\]
Further, let $ G(y)=(-n)^n y+4(n-2)^{n-2}\left(\frac{a}{2}\right)^n, $
and define
$
D_G=\gcd\{G(t):t\in \mathbb Z\}.
$
Let $D_G'$ be the smallest divisor of $D_G$ such that $D_G/D_G'$ is square-free. For
each prime $p$, write
$
q_p=\nu_p(D_G'),
$
and let $\omega_G(p)$ denote the number of integers $\alpha$, with
$1\le \alpha\le p^{2+q_p}$, such that
$
G(\alpha)\equiv 0 \pmod{p^2}.
$
Also, define
\[
\Lambda_n=
\prod_{p\le \sqrt n}\frac{1}{p^2}
\prod_{p>\sqrt n}\left(1-\frac{n}{p^2}\right),
\qquad
\lambda_G=
\prod_{p\ \mathrm{prime}}
\left(1-\frac{\omega_G(p)}{p^{2+q_p}}\right).
\]

\begin{Theorem}{\label{analytic t}}
Let $B,C >0$ be integers. 
With the above notation, 
if the abc-conjecture is true, then there are at least
\[
\Lambda_n\lambda_G
\frac{\varphi(\kappa)}{\delta\,\ell\,\kappa^2\zeta(2)\rho^2}\,BC+o(BC)
\]
pairs $(b,c)$ with $1\le b\le B$ and $1\le c\le C$ such that both
$
f(x)=x^n+ax^{n-1}+dx^{n-2}+b
$
and 
$
F(x)=(x^m+c)^n+a(x^m+c)^{n-1}+d(x^m+c)^{n-2}+b
$
are irreducible and monogenic.
\end{Theorem}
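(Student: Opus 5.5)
Our plan is to combine Corollary \ref{analytic c} with a squarefree-values sieve conditional on the abc-conjecture. We count pairs $(b,c)$ lying in suitable residue classes and satisfying four requirements: (a) $b$ is squarefree with $\operatorname{rad}(m)\mid b$; (b) $\nu_p(G(b))\le 1$ for all primes $p\nmid b$; (c) $\nu_p(f(c))\le 1$ for all $p\nmid b$; (d) the congruence conditions (2) and (3) of Corollary \ref{analytic c} hold. Irreducibility is then forced by an Eisenstein argument, and we show the resulting density is at least the stated constant.

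\textbf{Step 1 (local conditions on $b$).} We impose $b\equiv 0\pmod{\kappa}$, where $\kappa=\operatorname{rad}(m\ell)$. Since $\ell\mid d$, $\ell\mid a$ (from $a^2=4d$) and $\ell\mid b$ with $b$ squarefree, $f$ is $\ell$-Eisenstein, hence irreducible. Also $F(x)\equiv x^{mn}\pmod \ell$ and $F(0)\equiv f(c)$. To make $F$ $\ell$-Eisenstein we additionally require $c\equiv 0\pmod \ell$; then $F(0)=f(c)\equiv b\not\equiv 0\pmod{\ell^2}$, which accounts for the factor $1/\ell$.

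Squarefreeness of $b$ within the class $b\equiv 0 \pmod\kappa$, with $b/\kappa$ coprime to $\kappa$, contributes the factor $\varphi(\kappa)/(\kappa^2\zeta(2))$, up to local corrections at primes dividing $\kappa$. For condition (2) we impose $b\equiv 1\pmod 4$ when $2\nmid dm$, giving the factor $1/\delta$. For each $p\mid\rho$ (these are coprime to $\kappa$), condition (3) fixes $b$ modulo $p^2$ in a set of density at least $1/p^2$, giving the factor $1/\rho^2$.

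\textbf{Step 2 (sieving $G(b)$ and $f(c)$).} $G$ is linear in $b$, so squarefreeness of $G(b)/D_G'$ follows from an elementary sieve with no abc needed. The local densities are $1-\omega_G(p)/p^{2+q_p}$, whose product is $\lambda_G$.

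For $f(c)$, a polynomial of degree $n$ in $c$, squarefree values along $c\le C$ require the abc-conjecture; we invoke Granville's theorem. The local factor at $p$ is $1-\#\{c \bmod p^2 : f(c)\equiv 0\}/p^2$. For $p>\sqrt n$ we bound the root count by $n$ (Hensel lifting from simple roots; the case of multiple roots needs a small argument). For $p\le\sqrt n$ we use the trivial lower bound $1/p^2$, since the class $f(c)\equiv b\not\equiv 0$ survives. This gives $\Lambda_n$.

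Both sieves must hold uniformly in $b$, and we need $\nu_p\le1$ only for $p\nmid b$. We handle the joint count by summing over $b$ the abc-conditional count in $c$, taking care that the error term is uniform for $b\le B$. The main obstacle is exactly this uniformity of Granville's asymptotic in the parameter $b$. We would first try to apply the abc-based argument to the two-variable polynomial $f(c)$ viewed with $b$ as a parameter: truncate large primes via abc applied to $c^n+ac^{n-1}+dc^{n-2}$ and $b$, and control medium primes by the standard tail bound $\sum_{p>z}\#\{(b,c): p^2\mid f(c)\}\ll BC/z$.

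\textbf{Step 3 (assembly).} Multiplying the independent local densities, which is justified by CRT since $\gcd(\kappa,\rho)=1$ and $\ell\nmid n$, gives a main term of at least $\Lambda_n\lambda_G\varphi(\kappa)BC/(\delta\ell\kappa^2\zeta(2)\rho^2)$, with error $o(BC)$. Each counted pair satisfies the hypotheses of Corollary \ref{analytic c}, so both $f$ and $F$ are monogenic.
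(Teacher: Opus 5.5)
Your outline follows the paper's proof step for step. Both use $\ell$-Eisenstein for $f$ and $F$ via $\ell\mid\kappa\mid b$ and $\ell\mid c$. Both apply Granville's theorem, for each fixed squarefree $b$, to $f(\ell x)$ (with $D'=1$ since $D\mid b$), giving at least $(\Lambda_n+o(1))C/\ell$ values of $c$. Both use a sieve on the linear $G$ for $\lambda_G$, the factors $\varphi(\kappa)/(\kappa^2\zeta(2))$, $1/\rho^2$, $1/\delta$, and then Corollary \ref{analytic c}.

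The step you leave open is a genuine gap, and the paper does not close it either. Granville's asymptotic concerns one fixed polynomial, so the $o(1)$ in the count of $c\le C$ depends on $b$. The paper merely ``combines'' the fixed-$b$ bound with the number of admissible $b$, which is unjustified when $B$ and $C$ grow independently.

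Your suggested repair, read as abc for the triple $\bigl(c^{n-2}(c+\tfrac a2)^2,\,b,\,f(c)\bigr)$, does not supply this uniformity. Since $c^n+ac^{n-1}+dc^{n-2}=c^{n-2}(c+\tfrac a2)^2$ has radical $\ll C^2$, abc with $q^2\mid f(c)$ yields only $q\ll_\epsilon C^{2}B\,(BC)^{\epsilon}$. That is no better than the trivial $q\le|f(c)|^{1/2}$ unless $B$ is below about $C^{n/2-2}$. Elementary counting of pairs with $q^2\mid f(c)$ only disposes of $q$ up to about $\max(B,C)$. So primes between roughly $\max(B,C)$ and $C^{n/2}$ are untouched. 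What is needed is an abc-conditional squarefree sieve for the two-variable polynomial $c^n+ac^{n-1}+dc^{n-2}+b$ on $[1,B]\times[1,C]$, uniform as both sides grow and run jointly with the conditions on $b$. A variant of Poonen's theorem on squarefree values of multivariable polynomials (Duke Math.\ J.\ 118, 2003) is the natural tool.

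Two further points are also inherited from the paper.

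First, CRT does not make the conditions on $b$ independent at a single prime. For $p\nmid 2n(n-2)a\kappa$, the conditions $p^2\nmid b$ and $p^2\nmid G(b)$ exclude two distinct classes mod $p^2$. So the joint local factor is $1-2/p^2<(1-1/p^2)^2$. The Euler product from the joint sieve must be compared with the stated constant (there is slack elsewhere, e.g.\ at $p=2$ in $\Lambda_n$), not obtained by multiplying separately computed densities.

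Second, your requirement (b), like the paper's check of condition (1) of Corollary \ref{analytic c}, fails whenever $\rho>1$. For $p\mid\rho$ one has $p^{n}\mid G(b)$ for every $b$, while $p\nmid b$ for the paper's choice $b\equiv-\gamma_p\pmod{p^2}$. The same happens at $p=2$ when $n$ is even and $2\nmid dm$. The sieve only makes $G(b)/D_G'$ squarefree. At those primes you must bypass the corollary and apply Theorem \ref{main thm} (iii)(a), resp.\ (iv), directly (and the analogous criterion for $f$); these do not involve $G$.

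On the positive side, your caution about multiple roots is justified. The paper's reason ``$\deg H_b=n$'' for $\omega_{H_b}(p)\le n$ is false modulo $p^2$, since a multiple root of $\bar f$ has either $0$ or $p$ lifts. The bound holds only because of the conditions on $b$. For example, for $p\nmid 2nab$ the only candidate multiple root $z$ (with $2nz\equiv-a(n-2)\pmod{p^2}$) has roots of $f$ mod $p^2$ above it iff $p^2\mid G(b)$, because $n^nf(z)\equiv(-1)^nG(b)\pmod{p^2}$ as in Case v.
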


We remark that the abc-conjecture is utilized specifically to derive the asymptotic distribution of $c$. For the weaker requirement of proving that infinitely many such $c$ exist along with the asymptotic formula for $b$, one may rely instead on the unconditional results of Erd\"{o}s \cite{Erdos53} concerning the squarefree values of irreducible polynomials.

Now we present an instance that illustrates the application of above  Corollaries and Theorem \ref{t2}, where \( K = Q(\theta) \), and \( \theta \) denotes a root of the polynomial \( F(x) \).

\begin{exmp}

\begin{enumerate}
\item  Consider the differential equation
\begin{align}{\label{e1}}
          \bigg( \frac{d^5 }{dx^5} +5 \bigg )^p y+ 10\bigg( \frac{d^5 }{dx^5} +5 \bigg )^{p-1} y+25\bigg( \frac{d^5 }{dx^5} +5 \bigg )^
    {p-2}y+5y=0 
    \end{align}
    with auxiliary equation \( F(x) = (x^5 + 5)^p + 10(x^5 + 5)^{p-1} +25(x^5+5)^{p-2} + 5 \), where $p\neq 5$ is an odd prime. 
Note that \( F(x) \) is a \( 5 \)-Eisenstein polynomial. We apply Corollary \ref{c2} with \( b=m=c= 5, a=10, d=25\) and \( n=p \). Observe that \( 5 \) divides \( b \) but \( 5^2 \nmid b\),  therefore \( \mathcal{O}_K = \mathbb{Z}[\theta] \) if and only if 
\((-p)^{p}+4 (p-2)^{p-2}5^{p-1}\) and \(   4\cdot5^{p-1}+1 \)
are both square-free. It can be verified that \((-p)^{p}+4 (p-2)^{p-2}5^{p-1}\) and \(   4\cdot5^{p-1}+1 \) are both square-free for all primes \( p < 67 \), except for \( p = 13 \). Hence, \( \mathcal{O}_K = \mathbb{Z}[\theta] \) holds for these primes. Therefore, using Theorem \ref{t2}, the differential equation \eqref{e1} has general solution of the form 
\begin{align*}
    y(x) = \sum_{i=1}^{5p} s_i\prod_{j=1}^{5p}e^{r_{j-1}^{(i)} \theta^{j-1}x},
\end{align*}
where $r_{j-1}^{(i)}$ are integers and $s_i$ are arbitrary real constants, for all $1\leq i,j \leq 5p$. \\

\item \( F(x) = (x^5 + 5)^p + 10(x^5 + 5)^{p-1} +25(x^5+5)^{p-2} + 5 \), where $p\neq 5$ is an odd prime from part (1). Keeping in mind that $F(x)$ is $5$-Eisenstien and applying Corollary \ref{c3}, we conclude that the polynomial \( F_{q,j}(x) \) is monogenic for all \( j \geq 0 \) and primes \( p < 67 \), excluding \( p = 13 \).

\vspace{2mm}
\noindent
When \( p = 13 \), it can be verified that \( |D_F| = 5^{129}\cdot 17^{8} \cdot u^4 \cdot v^5 \), where \( u ,v \) are square-free integers. Observing that \( D_F = [\mathcal{O}_K : \mathbb{Z}[\theta]]^2 d_K \), it follows from Theorem \ref{main thm} (v) that the integers $5,u$ and $v$ does not divide \( [\mathcal{O}_K : \mathbb{Z}[\theta]] \),  and that \( 17 \) divide \( [\mathcal{O}_K : \mathbb{Z}[\theta]] \). Hence \( [\mathcal{O}_K : \mathbb{Z}[\theta]] >1 \). 
Therefore for $p = 13$, using Corollary \ref{c3}, we conclude that $F_{q,j}(x)$ is non-monogenic for all $j \geq 0$.
\end{enumerate}
\end{exmp}
Here, all calculations to verify the factorization of the discriminant were conducted using SageMath \cite{sage}.

\section{Preliminaries}
In the following discussion, let \( p \) denote a prime number.  
For a polynomial \( h(x) \in \mathbb{Z}[x] \), we write \( \overline{h}(x) \) for the polynomial over \( \mathbb{Z}/p\mathbb{Z} \) 
obtained by reducing each coefficient of \( h(x) \) modulo \( p \).

\vspace{2mm}

The following result, referred to as Dedekind's criterion, will be used in the subsequent discussion. Dedekind (see \cite[Theorem 6.1.4]{h_cohen}, \cite{dedekind}) established the equivalence of assertions (i) and (ii). K. Uchida \cite{uchida} proved the equivalence of (i) and (iii), and a simple proof of the equivalence between (ii) and (iii) is given in \cite[Lemma 2.1]{jakhar_jnt}.

\begin{Lemma}{\cite {h_cohen,dedekind, uchida, jakhar_jnt}}{\label{jones}}
    Let $f(x) \in \mathbb{Z}[x]$ be a monic irreducible polynomial having the factorization $h_1(x)^{e_1}\cdots h_t(x)^{e_t}$ modulo  prime $p$ as a product of powers of distinct irreducible
polynomials over $\mathbb{Z}/p\mathbb{Z}$ with $h_i(x) \in  \mathbb{Z}[x]$ monic. Let $K = Q(\theta)$ with $\theta$ a root of $f(x)$.
Then the following statements are equivalent
\begin{enumerate}
    \item[(i)]  $p$ does not divide $[\mathcal{O}_K :\mathbb{Z}[\theta]]$.
    \item[(ii)] For each $i$, we have either $e_i = 1$ or $\overline{h_i}(x)$ does not divide $\overline{M}(x)$ where $M(x) =\frac{1}{p}(f(x) -h_1(x)^{e_1}\cdots h_t(x)^{e_t})$.
    \item[(iii)] $f(x)$ does not belong to the ideal $\langle p, h_i(x) \rangle ^{2}$
in $\mathbb{Z}[x]$ for any $i$, $1 \leq i \leq t$.

\end{enumerate}
\end{Lemma}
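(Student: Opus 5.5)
We would prove the three equivalences by passing through commutative algebra rather than by Dedekind's original explicit computation. The idea is to view $\mathbb{Z}[\theta]\cong \mathbb{Z}[x]/(f)$ and to read (iii) as a regularity statement about the local rings of $\mathbb{Z}[\theta]$ at primes above $p$.

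\emph{Step 1: (i) is a local normality condition.} Since $f$ is monic and irreducible, the primes of $\mathbb{Z}[x]$ containing $(p,f)$ are exactly $\mathfrak{m}_i=\langle p,h_i(x)\rangle$, $1\le i\le t$. Hence the maximal ideals of $\mathbb{Z}[\theta]$ above $p$ are the images $\mathfrak{p}_i=(p,h_i(\theta))$. The index $[\mathcal{O}_K:\mathbb{Z}[\theta]]$ is prime to $p$ if and only if $\mathbb{Z}[\theta]_{(p)}=\mathcal{O}_K\otimes\mathbb{Z}_{(p)}$. This holds if and only if $\mathbb{Z}[\theta]_{\mathfrak{p}_i}$ is integrally closed for every $i$. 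Each $\mathbb{Z}[\theta]_{\mathfrak{p}_i}$ is a one-dimensional noetherian local domain, so it is integrally closed if and only if it is a DVR, that is, regular.

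\emph{Step 2: (i)$\Leftrightarrow$(iii).} The ring $A=\mathbb{Z}[x]_{\mathfrak{m}_i}$ is a two-dimensional regular local ring with regular system of parameters $p,h_i$. For $0\neq f\in\mathfrak{m}_i$, the quotient $A/(f)$ is regular (of dimension $1$) if and only if $f\notin \mathfrak{m}_i^2A$. Moreover $\mathfrak{m}_i^2A\cap\mathbb{Z}[x]=\mathfrak{m}_i^2$, because $\mathfrak{m}_i^2$ is $\mathfrak{m}_i$-primary. Combining this with Step 1 gives (i)$\Leftrightarrow$(iii).

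\emph{Step 3: (ii)$\Leftrightarrow$(iii).} Write $f=\prod_j h_j^{e_j}+pM$.

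If $e_i=1$, write $\prod_j h_j^{e_j}=h_i u$ with $\bar h_i\nmid\bar u$. Then $u\notin\mathfrak{m}_i$. In the two-dimensional $\mathbb{F}_p[x]/(\bar h_i)$-vector space $\mathfrak{m}_i/\mathfrak{m}_i^2$, spanned by the classes of $p$ and $h_i$, the class of $f$ has nonzero $h_i$-coordinate. Hence $f\notin\mathfrak{m}_i^2$.

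If $e_i\ge 2$, then $\prod_j h_j^{e_j}\in\mathfrak{m}_i^2$. So $f\in\mathfrak{m}_i^2$ if and only if $pM\in\mathfrak{m}_i^2$. The same coordinate argument, now on the $p$-coordinate, shows this holds if and only if $M\in\mathfrak{m}_i$, that is, $\bar h_i\mid\bar M$. This is exactly the negation of (ii) for this index $i$, which proves (ii)$\Leftrightarrow$(iii).

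The main obstacle is Step 1, namely justifying cleanly that the index is prime to $p$ exactly when all the localizations $\mathbb{Z}[\theta]_{\mathfrak{p}_i}$ are DVRs. Two facts are needed: that $\mathcal{O}_K$ is the integral closure of $\mathbb{Z}[\theta]$, and that localization commutes with integral closure. With these in hand the $p$-part of $\mathcal{O}_K/\mathbb{Z}[\theta]$ vanishes if and only if $\mathbb{Z}[\theta]_{(p)}$ is normal, and the rest is standard. Alternatively, one may simply cite Dedekind and Uchida for (i)$\Leftrightarrow$(ii) and (i)$\Leftrightarrow$(iii), and give only the elementary Step 3 as in \cite[Lemma 2.1]{jakhar_jnt}.
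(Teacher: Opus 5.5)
Your proposal is correct, but the paper gives no proof of this lemma to compare against: it attributes (i)$\Leftrightarrow$(ii) to Dedekind (via \cite[Theorem 6.1.4]{h_cohen}), (i)$\Leftrightarrow$(iii) to Uchida \cite{uchida}, and (ii)$\Leftrightarrow$(iii) to \cite[Lemma 2.1]{jakhar_jnt}.

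Your Steps 1--2 are essentially the local-algebra argument behind Uchida's result. Normality of $\mathbb{Z}[\theta]_{(p)}$ is checked at the maximal ideals $\mathfrak{p}_i$, and $\mathbb{Z}[x]_{\mathfrak{m}_i}/(f)$ is regular if and only if $f\notin\mathfrak{m}_i^2$. Your Step 3 is the elementary membership computation of \cite{jakhar_jnt}, rewritten in the cotangent space $\mathfrak{m}_i/\mathfrak{m}_i^2$. You correctly get the basis $\{[p],[h_i]\}$ of that space over $\mathbb{F}_p[x]/(\bar h_i)$ from the regularity of $\mathbb{Z}[x]_{\mathfrak{m}_i}$.

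The one structural difference is that you obtain Dedekind's equivalence (i)$\Leftrightarrow$(ii) as a corollary of the other two, rather than through Dedekind's explicit argument. Your route is self-contained modulo standard commutative algebra, and it carries over directly to a Dedekind base ring, which is Uchida's setting. Dedekind's route, as presented in Cohen, is more hands-on. When the criterion fails, it exhibits an explicit element of $\mathcal{O}_K\setminus\mathbb{Z}[\theta]$ of the form $U(\theta)/p$, together with the resulting enlarged order; your argument does not produce this.

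A cosmetic point: the description of the primes above $p$ in Step 1 uses only that $f$ is monic. Irreducibility is what makes $\mathbb{Z}[x]/(f)\cong\mathbb{Z}[\theta]$ a domain.
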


\section{Proof of Theorem \ref{main thm}}\label{main res}

Throughout this section, let \( p \) be a prime number dividing \( D_F \).  
By Lemma \ref{jones}, the prime \( p \) divides \( [\mathcal{O}_K : \mathbb{Z}[\theta]] \) if and only if  
\( F(x) \in \langle p, g(x) \rangle^2 \) for some monic polynomial \( g(x) \in \mathbb{Z}[x] \) such that  
\( \overline{g}(x) \mid \overline{F}(x) \) and \( g(x) \) is irreducible modulo \( p \).  
It should be noted that \( F(x) \in \langle p, g(x) \rangle^2 \) only if \( \overline{g}(x) \) is a repeated factor of \( \overline{F}(x) \).

\vspace{2mm}

\noindent \textbf{Case i}.  Consider the first case, when $p$ divides $b$. We split this case into two sub-cases  according to $p$ divides $d$ or not.
\vspace{1mm}
 
\noindent \textbf{Sub-case 1.}
First, consider the case where $p \mid d$. Since $a^2 = 4d$, it follows that $p \mid a$. In this case, we have 
$F(x) \equiv (x^m + c)^n \pmod{p}$. 
Let $h(x) = x^m + c$, and denote by 
$\overline{h}(x) = \overline{h}_1^{e_1}(x)\, \overline{h}_2^{e_2}(x)\, \cdots\, \overline{h}_t^{e_t}(x)$ 
the factorization of $x^m + \overline{c}$ over $\mathbb{Z}/p\mathbb{Z}$, where each $h_i(x) \in \mathbb{Z}[x]$ 
is a distinct monic polynomial that is irreducible modulo $p$. 
We may then write 
$h(x) = h_1^{e_1}(x)\, h_2^{e_2}(x)\, \cdots\, h_t^{e_t}(x) + pH_1(x)$ 
for some polynomial $H_1(x) \in \mathbb{Z}[x]$. Note that
\begin{align*}
    F(x) =  h(x)^{n}+ah(x)^{n-1} +dh(x)^{n-2}+b 
\end{align*}
Since $n \geq 3$ and $p \mid d$, it follows that the first three terms on the right-hand side of the above expression lie in $\langle p, h_i(x) \rangle^2$ for every $i$ with $1 \leq i \leq t$, except for the last term. Hence, $F(x) \in \langle p, h_i(x) \rangle^2$ for all $i$, $1 \leq i \leq t$, if and only if $p^2 \mid b$. Consequently, by Lemma~\ref{jones}, we obtain that $p \nmid [\mathcal{O}_K : \mathbb{Z}[\theta]]$ if and only if $p^2 \nmid b$.

\vspace{1mm}

\noindent \textbf{Sub-case 2.}  $p$ does not divide $d$. We first assume that $p\neq2$. As $a^2 =4d$, we get $p\nmid a$. So, we have $\overline{F}(x) = (x^m +\Bar{c})^{n-2}[(x^m +\Bar{c})^{2} + \Bar{a}(x^m +\Bar{c}) + \Bar{d}]$.  Since $p \neq 2$, one can write $\overline{F}(x) = (x^m +\Bar{c})^{n-2}(x^m +\Bar{c} + \frac{\Bar{a}}{2})^{2}$.
Let $H(x) = x^m +c, \; G(x) = x^m +c+ \frac{a}{2}$, then $F(x) = H(x)^{n-2} G(x)^{2} \pmod p $. Let $h(x) = H(x)G(x)$  and
$
\overline{h}(x)  = \overline{h}_1(x)^{e_1}\,\overline{h}_2(x)^{e_2}\cdots \overline{h}_t(x)^{e_t}
$
be the factorization of $h(x)$ modulo $p$, where each $h_i(x) \in \mathbb{Z}[x]$ is monic, irreducible modulo $p$, and distinct. Then we can express
$
h(x) = h_1(x)^{e_1}\cdots h_t(x)^{e_t} + pH_1(x),
$
for some polynomial $H_1(x) \in \mathbb{Z}[x]$. Note that
\begin{align*}
    F(x) =  H(x)^{n-2}G(x)^2 +b.
\end{align*}
Since $n \geq 3$, the first term on the right-hand side of the above expression lies in $\langle p, h_i(x) \rangle^2$ for each $i$ with $1 \leq i \leq t$. Hence, $F(x) \in \langle p, h_i(x) \rangle^2$ for all $i$, $1 \leq i \leq t$, if and only if $p^2 \mid b$. Consequently, by Lemma~\ref{jones}, we have $p \nmid [\mathcal{O}_K : \mathbb{Z}[\theta]]$ if and only if $p^2 \nmid b$.
 If $p=2$ with $2\nmid d$, then since $a^2 =4d$, we have $2\mid a$ but $4\nmid a$. Hence, $\overline{F}(x) =  (x^m +\Bar{c})^{n-2}[(x^m +\Bar{c})^{2}  + \Bar{d}] =  (x^m +\Bar{c})^{n-2}[(x^m +\Bar{c})^{2}  + (\frac{\bar{a}}{2})^2] = (x^m +\Bar{c})^{n-2}[x^m +\Bar{c}  + \frac{\bar{a}}{2}]^2 $. So as earlier, $ 2 \nmid [\mathcal{O}_K : \mathbb{Z}[\theta]] $ if and only if $ 2^2 \nmid b $.

\vspace{2mm} 

\noindent \textbf{Case ii}. 
Assume \( p \nmid b \) and \( p \mid m \), where \( m = p^j s \) with \( j \geq 1 \).  We represent \( F(x) \) as \( F(x) \equiv ((x^s + c)^n + a(x^s + c)^{n-1} + d(x^s + c)^{n-2} + b)^{p^j} \pmod{p} \). Define \( h(x) = (x^s + c)^n + a(x^s + c)^{n-1} + d(x^s + c)^{n-2} + b \), hence \( F(x) = h(x^{p^j}) \). Let \( h(x) = h_1(x) \cdots h_t(x) \pmod{p} \) denote the factorization of \( h(x) \), where \( h_i(x) \in \mathbb{Z}[x] \) are monic, distinct, and irreducible modulo \( p \). Utilizing the binomial theorem, we have
    
\begin{align}{\label{2}}
    F(x) =&  ((x^s +c-c)^{p^j}+c)^n + a(x^m+ c)^{n-1} +d(x^m+c)^{n-2} +b  \nonumber
    \\
    =& \bigg( (x^s+c)^{p^j}+(-c)^{p^j} + \sum_{i=1}^{p^j -1}{{p^j}\choose {i}} (x^s+c)^{p^j -i} (-c)^i +c    \bigg)^n  +a(x^m+ c)^{n-1} + \nonumber
    \\ 
    & +d(x^m+c)^{n-2} +b\nonumber
    \\
=& (x^s +c)^{p^j n} +n(x^s+c)^{p^j (n-1)}t(x) + p^2N_1(x) +  a(x^m+c)^{n-1}+d(x^m+c)^{n-2} +b  \nonumber  \\
    =&(h(x) - a(x^s+c)^{n-1} -d(x^s+c)^{n-2} -b)^{p^j} +n(x^s+c)^{p^j (n-1)}t(x) + p^2N_1(x)  \nonumber
\\
&+  a(x^m+c)^{n-1} +d(x^m+c)^{n-2} +b \nonumber    \\
    =& h(x)^{p^j} + ph(x)T(x) + p^2N_1(x) + n(x^s+c)^{p^j (n-1)}t(x) \nonumber  \\
    &+ (-1)^{p^j}(a(x^s+c)^{n-1} + d (x^s+c)^{n-2} + b)^{p^j} + a(x^m+c)^{n-1} +d(x^m+c)^{n-2} +b 
\end{align}
for some $T(x), N_1(x) \in \mathbb{Z}[x]$,  where     $t(x)= \sum_{i=1}^{p^j }{{p^j}\choose {i}} (x^s+c)^{p^j -i} (-c)^i +c$. Since $j\geq 1$, first 3 terms on the right hand side of \eqref{2} belongs to  $\langle p, h_i(x)\rangle^2$ for all $i$, $1 \leq i \leq r$. Therefore, this case follows from Lemma \ref{jones}.

\medskip

\noindent \textbf{Case iii.} 
When \( p \nmid bm \) and \( p \mid d \). As $a^2=4d$, we have $p\mid a$ and $p^2\mid d$. Since \( p \mid D_F \), it follows that \( p \mid nf(c) \). First, consider the case where \( p \mid n \). Let \( n = sp^k \), where \( p \nmid s \) and \( k \geq 1 \). So 
\begin{align*}
    F(x) \equiv (x^m +c)^n +b \equiv ((x^m +c)^{s} +b)^{p^{k}} \pmod p.
\end{align*}
Let $h(x) = (x^m + c)^s + b$, and denote by $h_1(x) \cdots h_t(x)$ the factorization of $h(x)$ modulo $p$, where each $h_i(x) \in \mathbb{Z}[x]$ is a distinct monic polynomial that is irreducible modulo $p$. Since $(x^m + c)^n = (h(x) - b)^{p^k}$, we observe that 
\begin{align*}
    F(x) &= (h(x)- b)^{p^k} + a(x^m +c)^{n-1} +d(x^m +c)^{n-2}  +b \\
    & = h(x)^{p^{k}} + ph(x)H(x) +a(x^m +c)^{n-1} +d(x^m+c)^{n-2} + (-b)^{p^{k}} +b
\end{align*}
for some polynomial $H(x) \in \mathbb{Z}[x]$.
 Using $p^2 \mid d$, it follows that the first, second, and fourth terms on the right-hand side of the above equation lie in $\langle p, h_i(x) \rangle^2$ for each $i = 1, 2, \ldots, t$. 
Let $b_1 = \frac{b + (-b)^{p^k}}{p}$ and $a_1 = \frac{a}{p}$. 
Since $k \geq 1$, by Lemma~\ref{jones}, we have $p \nmid [\mathcal{O}_K : \mathbb{Z}[\theta]]$ if and only if $\overline{a}_1(x^m + \overline{c})^{n-1} + \overline{b}_1$ and $(x^m + \overline{c})^{n} + \overline{b}$ are co-prime.
 The polynomials $\overline{a}_1(x^m + \overline{c})^{n-1} + \overline{b}_1$ and $(x^m + \overline{c})^{n} + \overline{b}$ are coprime if and only if either 
(i) $p \mid a_1$ and $p \nmid b_1$, or 
(ii) $p \nmid a_1$ and the polynomials $(x^m + c)^{n-1} + \frac{\overline{b}_1}{\overline{a}_1}$ and $(x^m + c)^{n} + \overline{b}$ are coprime. 
Furthermore, if $\zeta$ is a common root of $(x^m + c)^{n-1} + \frac{\overline{b}_1}{\overline{a}_1}$ and $(x^m + c)^{n} + \overline{b}$ in the algebraic closure of $\mathbb{Z}/p\mathbb{Z}$, then 
\[
(-\overline{b})^{n-1} = ((\zeta^m + c)^n)^{n-1} = ((\zeta^m + c)^{n-1})^{n} = \left( \frac{-\overline{b}_1}{\overline{a}_1} \right)^{n}.
\]
Also, it can easily be seen that if $(\frac{-\bar{b}_1 }{\bar{a}_1})^n = (-\bar{b})^{n-1} $ then $\zeta$ defined by $\zeta^m +c = \frac{\bar{b} \bar{a}_1}{\bar{b}_1}$ is common root of polynomials   $(x^m+c)^{n-1} +\frac{\bar{b}_1}{\bar{a}_1}$ and $(x^m+c)^{n} +\bar{b}$. Therefore, (II) holds if and only if $p$ does not divide $a_1((-b_1)^n - a_1^n(-b)^{n-1})$.

Now, suppose \( p \nmid n \). Since \( p \mid D_F \), it follows that \( p \mid f(c) \), which further implies \( m \geq 2 \). Let \(\varepsilon\) be a repeated root of \(\overline{F}(x) = (x^m + \overline{c})^n + \overline{b}\) in the algebraic closure of \(\mathbb{Z}/p\mathbb{Z}\). Then \(\overline{F}(\varepsilon) = \overline{F}'(\varepsilon) = 0\). That is 
\begin{align*}
(\varepsilon^m + \overline{c})^n +\overline{b} =\overline{0}; \;\; \overline{n}(\varepsilon^m +\overline{c})^{n-1}\overline{m}\varepsilon^{m-1}=\overline{0}.
\end{align*}
Note that \(\varepsilon^m + c \neq 0\); if it were equal to zero, then \(p \mid b\), which is not true in this case. Additionally, since \(p \nmid mn\), we observe that \(\varepsilon = \overline{0}\) is the unique repeated root of \(\overline{F}(x)\) over \(\mathbb{Z}/p\mathbb{Z}\). Since \(m \geq 2\), all non-constant terms on the right-hand side of
\begin{align*}
    F(x) = \sum_{i=0}^{n-1}{{n}\choose{i}}x^{m(n-i)}c^i +a\sum_{i=0}^{n-2}{{n}\choose{i}}x^{m(n-i)}c^i+d\sum_{i=0}^{n-3}{{n}\choose{i}}x^{m(n-i)}c^i + f(c)
\end{align*}
belongs to $\langle p,x\rangle^2$.
Therefore, $F(x) \in \langle p, x \rangle^2$ if and only if $p^2 \mid f(c)$. 
Hence, by Lemma~\ref{jones}, we have $p \nmid [\mathcal{O}_K : \mathbb{Z}[\theta]]$ if and only if $p^2 \nmid f(c)$.

\vspace{2mm}
\noindent \textbf{Case iv.} 
 Consider the case when $p=2$ and $2\nmid dbm.$  Given that  $a^2 =4d$, it follows that $2\mid a$, and $4\nmid a$. 
 Since $2 \mid D_f$, and referring to Equation \eqref{disc}, we observe that $2 \mid n f(c)$. 
First suppose $2\mid n$. Write $n=2s$ and $n-2= 2(s-1)$. Clearly $2\nmid \text{gcd}(s, s-1)$. Observe that $\overline{F}(x) = (x^m+\overline{c})^n + \overline{d}(x^m +\overline{c})^{n-2} + \overline{b}$.
Let $h(x) = (x^m + c)^{s} + d(x^m + c)^{s-1} + b$. 
Suppose 
$
h(x) \equiv \prod_{i=1}^{t} h_i(x)^{r_i} \pmod{2}
$
is the factorization of $h(x)$ into a product of irreducible polynomials modulo $2$, where each $h_i(x) \in \mathbb{Z}[x]$ is monic and $r_i > 0$. 
Noting that $(h(x) - d(x^m + c)^{s-1} - b)^2 = (x^m + c)^n$, we obtain that 
 \begin{align*}
     F(x) = &h(x)^2 - 2h(x)\{d(x^m+c)^{s-1}+b\} +(d^2+d)(x^m +c)^{n-2} + a(x^m+c)^{n-1} \\ &+2bd(x^m+c)^{\frac{n-2}{2}} 
     + (b^2+b).
 \end{align*}
The first two terms on the right-hand side of the above equation lie in $\langle 2, h_i(x) \rangle^2$ for each $i = 1, 2, \ldots, t$. 
Thus, \( F(x) \in \langle 2, h_i(x) \rangle^2 \) for some \( i = 1, 2, \ldots, t \) 
if and only if the polynomials 
\[
G(x) :=  \frac{a}{2}(x^m+c)^{n-1} + \frac{d^2+d}{2}(x^m+c)^{n-2} + bd(x^m+c)^{\frac{n-2}{2}} + \frac{b^2+b}{2}  
\]
and \( h(x) \) share a common root modulo \( 2 \).
Therefore, by Lemma~\ref{jones}, 
\( F(x) \notin \langle 2, h_i(x) \rangle^2 \) for any \( i = 1, 2, \ldots, t \)
if and only if \( G(x) \) and \( h(x) \) are coprime modulo \( 2 \). We now claim that \( h(x) \) is co-prime to \( G(x) \) modulo \( 2 \)
if and only if \( b \equiv 1 \pmod{4} \).

Note that $n=2s$. Since $2\mid a$, $2\nmid bd$, and $4\nmid a$,
we can write
\(
 a=2A,d=2D+1, b=2B+1
\)
with $A,D,B\in\mathbb{Z}$ and $2\nmid A$. So
\[
h(x)\equiv (x^m+c)^{s} +(x^m+c)^{s-1}+1 \pmod 2,
\]
and $G(x)$ can be written as
\[
G(x)=A(x^m+c)^{2s-1}+(2D^2+3D+1)(x^m+c)^{2s-2}+ (2D+1)(2B+1)(x^m+c)^{s-1}+  (2B^2+3B+1),
\]
so that
\[
G(x)\equiv (x^m+c)^{2s-1} + (D + 1)(x^m+c)^{2s-2} +(x^m+c)^{s-1} +B+1    \pmod{2}.
\]
Setting $\delta \equiv D\pmod{2}$ and $\gamma\equiv B\pmod{2}$ (hence $\delta,\gamma\in\{0,1\}$). Thus, it remains to establish the claim in the following four cases. 

\[
\text{(i) }\delta=0,\ \gamma=0,\qquad
\text{(ii) }\delta=0,\ \gamma=1,\qquad
\text{(iii) }\delta=1,\ \gamma=0,\qquad
\text{(iv) }\delta=1,\ \gamma=1.
\]
It is easy to observe that $G(x)$ is co-prime to $h(x)$ modulo $2$ in (i), (iii) and (iv), and fails to be coprime only in (ii), i.e. precisely when $\delta=0,\gamma=1$.

Returning to the definitions of $\delta$ and $\gamma$, this occurs precisely when
\[
D\equiv 0\pmod{2}\quad\text{and}\quad B\equiv 1\pmod{2},
\]
which is equivalent to
\[
d\equiv 1\pmod{4}\quad\text{and}\quad b\equiv 3\pmod{4}.
\]
By negating this condition, we get $d\equiv 3\pmod{4} \text{ or } b\equiv 1\pmod{4}.$ Since $d=(a/2)^2$, so $d\not\equiv 3\pmod{4}$. Hence, we get $b\equiv 1\pmod{4}$.

\noindent Now suppose $2 \nmid n$. Then $2 \mid f(c)$, which leads to a contradiction.  Assume $2 \mid f(c)$. Let $\varepsilon$ be a repeated root of the polynomial
$
F(x) = (x^m + c)^n + a(x^m + c)^{n-1} + d(x^m + c)^{n-2} + b \pmod 2
$ in the algebraic closure of $\mathbb{Z}/2\mathbb{Z}$. Then
\begin{align*}
    F(\varepsilon) &\equiv (\varepsilon^m + c)^{n-2} ((\varepsilon^m +c)^2+d)+b \equiv 0 \pmod 2,
    \\
    F^\prime(\varepsilon)  &\equiv n(\varepsilon^m +c)^{n-3}m\varepsilon^{m-1} ((\varepsilon^m +c)^2+d)  \equiv 0 \pmod2.
\end{align*}
Since $2 \nmid nm$, it follows that either $\varepsilon \equiv 0 \pmod{2}$, or $\varepsilon^m + c \equiv 0 \pmod{2}$, or $(\varepsilon^m + c)^2 + d \equiv 0 \pmod{2}$. If $\varepsilon \equiv 0 \pmod{2}$, then $2 \mid c^n + dc^{\,n-2} + b$, i.e,  $2 \mid c^{n-2}(c^2+ d) + b$, and whether $2 \mid c$ or $2 \nmid c$, in both cases $2 \mid b$, a contradiction. The cases $\varepsilon^m + c \equiv 0 \pmod{2}$ and  $(\varepsilon^m + c)^2 + d \equiv 0 \pmod{2}$ are also not possible since $2 \nmid b$.  Therefore, the assumption $2 \mid f(c)$ cannot hold.

\vspace{2mm}

\noindent \textbf{Case v.} Consider the case when $p \nmid abm$. In view of $a^2=4d$, we have  $p\neq2$. Let $\varepsilon$ be a repeated root of $F(x)$ in the algebraic closure of $\mathbb{Z}/p\mathbb{Z}$. That is 
\begin{align*}
F(\varepsilon) &= (\varepsilon^m + c)^n +a(\varepsilon^m +c)^{n-1} +d(\varepsilon^m +c)^{n-2} + b \equiv0 \pmod p ,
\\
F^\prime (\varepsilon) &= m\varepsilon^{m-1}(\varepsilon^m +c)^{n-3}\{ n(\varepsilon^m +c)^{2} + a(n-1)(\varepsilon^m +c) + (a/2)^2 (n-2) \} \equiv0 \pmod p .
\end{align*}

\noindent
Note that $\varepsilon^m +c \equiv 0 \pmod p \implies p\mid b$, which is not true, so $\varepsilon^m +c \not\equiv 0 \pmod p$.  Observe that \( \varepsilon = \overline{0} \) is a root if and only if \( p \mid f(c) \). In this case, \( F(x) \notin \langle p, x \rangle^2 \) if and only if \( p^2 \nmid f(c) \). If $\varepsilon \neq \overline{0}$ then using second congruence above we obtain  
\begin{align*}
     n(\varepsilon^m +c)^{2} + a(n-1)(\varepsilon^m +c) + (a/2)^2 (n-2)  \equiv0 \pmod p.
\end{align*}

\noindent Observe that $p\nmid n$. For if $n \equiv 0\pmod p$ then we would have $\varepsilon^m +c = \frac{-a}{2} \pmod p$. Substituting this into the congruence $F(\varepsilon)\equiv 0\pmod p $ implies $p\mid b$, a contradiction. In light of the expression of $\bar{F}^\prime (\varepsilon)$ above and also that $p\nmid bn$, $ \varepsilon^m +\overline{c} = \frac{-\overline{a}(\overline{n-2})}{\overline{2n}}$  or $\varepsilon^m +\overline{c} = \frac{-\overline{a}}{\overline{2}}.$ But putting $\varepsilon^m +\overline{c} = \frac{-\overline{a}}{\overline{2}}$ in the expression for $\overline{F}(\varepsilon)$ above implies $p\nmid b$. a contradiction. Therefore, we have 
\begin{align}{\label{5}}
    \varepsilon^m +\overline{c} = \frac{-\overline{a}(\overline{n-2})}{\overline{2n}}.
\end{align}
Also, since $F^{\prime \prime}(\varepsilon) \not\equiv 0  \pmod p $, it has multiplicity 2.
Select $z\in \mathbb{Z}$ in such a manner that
\begin{align}{\label{6}}
    2nz\equiv -a(n-2) (\operatorname{mod} p^2)  .
\end{align}
Therefore, we have 
$$
\bar{F}(x) = (x^m + \bar{c}-\bar{z})^2\bar{h}(x),
$$
where $h(x) \in \mathbb{Z}/p\mathbb{Z}[x]$ is a separable polynomial. It is evident that \( F(x) \) can be expressed as
\begin{align}{\label{7}}
    F(x) = (x^m + c-z)q(x) + f(z),
\end{align}
where $$q(x) = \sum_{i=1}^n(x^m+c)^{n-i}z^{i-1} + a\sum_{i=2}^n(x^m+c)^{n-i}z^{i-2} + d\sum_{i=3}^n(x^m+c)^{n-i}z^{i-3}.$$ 
Observe that $\bar{q}(x) = (x^m +\bar{c}-\bar{z})\bar{h}(x)$ and $x^m +\bar{c}-\bar{z}$ does not divide $\bar{h}(x)$. Let \( \overline{h}(x) = \overline{h}_1(x) \dots \overline{h}_r(x) \) be the factorization of $\bar{h}(x)$, where \( \overline{h}_i(x) \) are distinct monic irreducible polynomials over \( \mathbb{Z}/p\mathbb{Z} \). Write
\begin{align*}
    q(x) = (x^m +c -z)\prod_{i=1}^rh_i(x) + pg_1(x)
\end{align*}
for some $g_1(x) \in \mathbb{Z}[x]$. Substituting the above equation in eq. \eqref{7}, we see that
\begin{align*}
     F(x) = (x^m +c -z)^2\prod_{i=1}^rh_i(x) + p(x^m +c -z)g_1(x) + f(z).
\end{align*}
Therefore using Lemma \ref{jones}, we see that $F(x) \notin \langle p, x^m +c -z \rangle ^2 \bigcup \langle p,h_i(x)\rangle^2$ if and only if $p^2 \nmid f(z)$. We now claim that  $p^2 \mid f(z)$ if and only if $p^2 \mid  (-b)^{n-2}\{(-n)^nb +4 (n-2)^{n-2}(a/2)^n\}$.

\noindent Using eq. \ref{6} and that $a^2=4d$ we have 
\begin{align}{\label{8}}
    (2n)^n f(z) \equiv (-1)^n[4a^n(n-2)^{n-2}+b(-2n)^n] (\text{mod } p^2).
\end{align}
Since $p\nmid 2b$ and multiplying $b^{n-2}$ to above equation on both sides we see that
$$
b^{n-2}n^n f(z) \equiv (-b)^{n-2}\{(-n)^nb +4 (n-2)^{n-2}(a/2)^n\} (\text{mod } p^2).
$$
 As $p \nmid nb$, claim follows.  Thus, according to Lemma \ref{jones}, we deduce, by combining both parts when $\varepsilon=\overline{0}$ and when $\varepsilon\neq \overline{0}$, that \( p \nmid [\mathcal{O}_K : Z(\theta)] \) if and only if \( p^2 \nmid  f(c) \) and  \( p^2 \nmid  (-b)^{n-2}\{(-n)^nb +4 (n-2)^{n-2}(a/2)^n\} \).
\qed

\section{Proof of Theorem \ref{t2}}

 The given differential equation is \begin{align}{\label{diff1}}
    \bigg( \frac{d^m }{dx^m} +c \bigg )^n y+ a\bigg( \frac{d^m }{dx^m} +c \bigg )^{n-1} y+d\bigg( \frac{d^m }{dx^m} +c \bigg )^
    {n-2}y+by=0, 
    \end{align}
    where $a^2=4d,\; n\geq3,\; m\geq 2$.  Observe that $\mathcal{F}(z) = (z^m + c)^n + a(z^m + c)^{n-1} +(z^m + c)^{n-2} + b$ is the irreducible auxiliary equation associated with \eqref{diff1}, having root $\theta$ and splitting field $K(\theta)$.  Suppose that every prime $p$ dividing the discriminant $D_{\mathcal{F}}$ satisfies one of the conditions $(i)–(v)$ of Theorem \ref{main thm}. Then, by the formula $
D_{\mathcal{F}} = [\mathcal{O}_K : \mathbb{Z}[\theta]]^2 d_K,
$ it follows that $\mathcal{O}_K = \mathbb{Z}[\theta]$, where $\mathcal{O}_K$ denotes the ring of integers of the number field $K = \mathbb{Q}(\theta)$. Also
 $$
 \mathbb{Z}[\theta] = \{c_0 + c_1\theta + c_2 \theta^2 +\cdots + c_{mn-1}\theta^{mn-1} \;|\; c_k \in \mathbb{Z} \text{ for all } 1\leq k\leq mn-1     \}.
 $$
Thus, every root of the equation $\mathcal{F}(z) = 0$ can be expressed in the form
$$
c_0^{(i)} + c_1^{(i)}\theta + c_2^{(i)}\theta^2 + \cdots + c_{mn-1}^{(i)}\theta^{mn-1},
$$
where each $c_{j-1}^{(i)} \in \mathbb{Z}$ for all $1 \leq i,j \leq mn$. Consequently, the general solution of the differential equation \eqref{diff1} is given by
\begin{align*}
    y(x) = \sum_{i=1}^{mn} \alpha_i\prod_{j=1}^{mn}e^{c_{j-1}^{(i)} \theta^{j-1}x},
\end{align*}
where  $\alpha_i$ is arbitrary real constants for all $1\leq i,j \leq mn$. \qed

\medskip

\section{Proof of Corollary \ref{analytic c} and Theorem \ref{analytic t}}

\vspace{2mm}

\noindent
\begin{proof}[Proof of Corollary \ref{analytic c}.]
Let \(p\) be a prime divisor of \(D_F\). First, suppose that \(p\mid b\). Since \(b\) is square-free, we have
$
p^2\nmid b.
$
Therefore condition \((i)\) of Theorem \ref{main thm} holds. Next, suppose that \(p\nmid b\) and \(p\mid m\). Since \(\operatorname{rad}(m)\mid b\),
every prime divisor of \(m\) is also a divisor of \(b\). This contradicts \(p\nmid b\).
Hence this case does not arise under the assumptions.

\vspace{2mm} 

Now suppose that $ p\nmid bm$ and $p\mid d.$
Since \(a^2=4d\), we get \(p\mid a\). We consider two subcases. If \(p\nmid n\) then the assumption $ \nu_p(f(c))\leq 1,$
yields $p^2\nmid f(c).$
Therefore condition \((iii)(b)\) of Theorem \ref{main thm} holds. 
Now suppose that \(p\mid n\). Write
$
n=sp^k,\; k\geq 1,\; p\nmid s.
$
Set
$
a_1=\frac{a}{p},
\;
b_1=\frac{b+(-b)^{p^k}}{p}.
$
We verify condition \((iii)(a)\) of Theorem \ref{main thm}. If \(p^2\mid a\), then \(p\mid a_1\). Then the assumption,
$ (-b)^p\not\equiv -b \pmod {p^2}, $ and \(p\nmid b\), gives $b+(-b)^{p^k}\not\equiv 0 \pmod {p^2}. $ Hence, $ p\nmid b_1.$ 
Thus $p\mid a_1 \text{ and } p\nmid b_1.$
So the first alternative of condition \((iii)(a)\) holds. If \(p\Vert a\), then \(p\nmid a_1\). By assumption, $ (-b)^p\equiv -b \pmod {p^2}$ and \(p\nmid b\), we get $ (-b)^{p^k}\equiv -b \pmod {p^2}.$ 
Therefore, $p\mid b_1.$
Hence, $ (-b_1)^n\equiv 0 \pmod p. $
Also, since \(p\nmid a_1\) and \(p\nmid b\), we have $ a_1^n(-b)^{n-1}\not\equiv 0 \pmod p.$
Thus, we have $ (-b_1)^n-a_1^n(-b)^{n-1}\not\equiv 0 \pmod p.$ 
Since \(p\nmid a_1\), we get
$
p\nmid a_1\left((-b_1)^n-a_1^n(-b)^{n-1}\right).
$
Therefore the second alternative of condition \((iii)(a)\) holds.

\vspace{1mm}

Next, suppose that
$
p=2
\text{ and }
2\nmid dbm.
$
By assumption,
$
b\equiv 1 \pmod 4.
$
Hence condition \((iv)\) of Theorem \ref{main thm} holds.

\vspace{1mm}

Finally, suppose that
$
p\nmid abm.
$
Then \(p\nmid b\). In this case, the assumption
$$
\nu_p(f(c))\leq 1
\text{ and } 
\nu_p\left((-n)^n b+4(n-2)^{n-2}\left(\frac a2\right)^n\right)\leq 1,
$$
implies
$p^2\nmid f(c) $
and
$p^2\nmid
\left\{(-n)^n b+4(n-2)^{n-2}\left(\frac a2\right)^n\right\}.$
Thus condition \((v)\) of Theorem \ref{main thm} holds.

Hence every prime divisor \(p\) of \(D_F\) satisfies one of the conditions
\((i)\)--\((v)\) of Theorem \ref{main thm}. Therefore, by Theorem \ref{main thm}, no prime divisor of
\(D_F\) divides the index
$
[\mathcal{O}_K:\mathbb{Z}[\theta]].$ 
Since every prime divisor of the index must divide the discriminant \(D_F\), it follows
that
$
\mathcal{O}_K=\mathbb{Z}[\theta].
$
Hence \(F(x)\) is monogenic.

\vspace{2mm}

Now we prove that \(f(x)\) is also monogenic. Let \(\alpha\) be a root of \(f(x)\), and put
\(L=\mathbb{Q}(\alpha)\). The discriminant of \(f(x)\) is
\[
D_f
=
(-1)^{n(n-1)/2}
(-b)^{n-2}
\left\{(-n)^n b+4(n-2)^{n-2}\left(\frac a2\right)^n\right\}.
\]
Let \(p\) be a prime divisor of \(D_f\).
If \(p\mid b\), then \(p^2\nmid b\), because \(b\) is square-free. Hence \(p \nmid [\mathcal{O}_L:\mathbb{Z}[\alpha]].\)
If \(p\nmid b\), then by assumption
$
\nu_p\left((-n)^n b+4(n-2)^{n-2}\left(\frac a2\right)^n\right)\leq 1
$,  we have
$p^2$ does not divide $(-n)^n b+4(n-2)^{n-2}\left(\frac a2\right)^n$ . Hence \(p \nmid [\mathcal{O}_L:\mathbb{Z}[\alpha]].\) 
For primes \(p\mid \gcd(d,n)\), the congruence assumptions in condition \((3)\) of the corollary gives the required alternatives condition \((iii)(a)\) which can be proved exactly using the arguments above. Therefore no prime divisor of \(D_f\) divides $[\mathcal{O}_L:\mathbb{Z}[\alpha]].$ Hence,
$[\mathcal{O}_L:\mathbb{Z}[\alpha]]=1. $ Therefore, \(f(x)\) is also monogenic. 
\end{proof}

The following theorem proved by Granville \cite[Theorem 1]{granville}  is the key ingredient in our proof of Theorem \ref{analytic t}.

\begin{Theorem}{\cite[Theorem 1]{granville}}{\label{gran}}
    Suppose that \(f(x)\in \mathbb{Z}[x]\), without any repeated roots. Let \(D=\gcd\{f(n):n\in \mathbb{Z}\}\), and select \(D'\) to be the smallest divisor of \(D\) for which \(D/D'\) is square-free. If the abc-conjecture is true, then
\[
\#\{1\le n\le X:f(n)/D' \text{ is square-free}\}\sim \lambda_f X
\]
where \(\lambda_f>0\) is a positive constant, which we determine as follows:
\[
\lambda_f=\prod_{p\text{ prime}}\left(1-\frac{\omega_f(p)}{p^{2+q_p}}\right),
\]
where for each prime $p$, we let $q_p$ be the largest power of $p$ which divides $D'$ and let $\omega_f(p)$ denote the number of integers $a$ in the range $1\le a\le p^{2+q_p}$ for which
$
f(a)\equiv 0 \pmod{p^2}.
$
\end{Theorem}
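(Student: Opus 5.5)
The plan is a sieve that separates the primes by the size of $p$ relative to $X$. Let $k=\deg f$. Write $g(n)=f(n)/D'$. For every prime $p$, the condition $p^2\mid g(n)$ is the same as $p^{2+q_p}\mid f(n)$, so it depends only on $n \bmod p^{2+q_p}$. We may also assume $f$ is primitive, after absorbing its content into $D'$.

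\textbf{Small primes, $p\le y$.} Fix a slowly growing parameter $y=y(X)$. By the Chinese remainder theorem, the number of $n\le X$ with $p^2\nmid g(n)$ for all $p\le y$ is
\[
X\prod_{p\le y}\left(1-\frac{\omega_f(p)}{p^{2+q_p}}\right)+O\Big(\prod_{p\le y}p^{2+q_p}\Big).
\]
The error term is $o(X)$ if $y$ grows slowly enough. The product converges to $\lambda_f$ as $y\to\infty$, because:
\begin{itemize}
\item $q_p=0$ for all but finitely many $p$;
\item $\omega_f(p)\le k$ for $p\nmid \operatorname{disc}(f)$, so the tail is $O\big(\sum_{p>y}k/p^2\big)$.
\end{itemize}
The choice of $D'$ guarantees $\omega_f(p)<p^{2+q_p}$ for every $p$, so no factor of the product vanishes and $\lambda_f>0$.

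\textbf{Medium primes, $y<p\le X\log X$.} For $y<p\le X$, each $p$ contributes at most $k(X/p^2+1)$ values of $n$. Summing these bounds would cost $\sum_{p\le X}1\sim X/\log X$, so I instead follow Hooley's argument. For $p\le X/\log X$ the crude bound gives $O(X/y+X/\log^2X)=o(X)$. For $X/\log X<p\le X\log X$, the roots $n$ of $f(n)\equiv 0\pmod{p^2}$ in $[1,X]$ are at most about $k$ per $p$, so the contribution is $O(X/\log X\cdot \log\log X)=o(X)$ by Mertens-type prime counts on this range.

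\textbf{Large primes, $p>X\log X$.} This step needs the abc-conjecture and is the main obstacle. Suppose $n\in(X/2,X]$ and $q^2\mid f(n)$ with $q>X\log X$. Then $f(n)=q^2r$ with $|r|\ll X^{k-2}/\log^2X$. I would use abc through Langevin's (Belyi-map) consequence for the homogenized binary form $F(x,z)=z^kf(x/z)$. For coprime $x,z$ of size at most $X$, it gives
\[
\operatorname{rad}(F(x,z))\gg_\epsilon X^{k-1-\epsilon}.
\]
Here $\operatorname{rad}(f(n))\le q|r|$, which forces $q$ to be somewhat large. That alone does not give $o(X)$, so the counting needs more care. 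The idea is that each such $n$ produces an unusually good rational approximation $n/1$, perturbed by the small cofactor $r$. I would group the $n$ into short intervals and, within each interval, pass to a nearby lattice point $(x,z)$ with $z$ small. One then shows, again via the radical lower bound, that $F(x,z)$ would be divisible by a square $q^2$ too large for abc unless $(x,z)$ is exceptional. Tuning $\epsilon$ against $\log X$ should give $o(X)$ for this range.

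Adding the three ranges gives the count $\lambda_fX+o(X)$. I expect the large-prime step to be by far the hardest. If the homogenization argument becomes unwieldy, I would fall back on citing Granville's argument for that step, since the theorem is quoted from \cite{granville}.
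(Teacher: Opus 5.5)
The paper gives no proof of this statement. It quotes it from Granville and uses it as a black box, so your sketch has to stand on its own. Your small-prime CRT step is fine, but the medium range is miscounted. Once $p^2>X$, all you know is that each prime contributes at most $k$ values $n\le X$. The range $(X/\log X,\,X\log X]$ contains $\pi(X\log X)\sim X$ primes, so this gives $O(X)$, not $O(X\log\log X/\log X)$. The factor $\log\log X/\log X$ is essentially $\sum 1/p$ over that range. It would be the right order only if most primes $p>X$ contributed no solution at all, i.e.\ if you could control where the roots of $f$ modulo $p^2$ fall relative to $[1,X]$. That is exactly the information that is unavailable, and no Hooley-type argument is actually supplied.

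The correct bookkeeping is this: the trivial bound $\ll X/y+\pi(P)$ for primes $y<p\le P$ is $o(X)$ precisely when $P=o(X\log X)$. The cost $\pi(X)\sim X/\log X$ you worried about is harmless, so the elementary range reaches $X$ but not $X\log X$. Everything from about $X$ upwards must come from abc.

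The large-prime step is the real gap: it contains no mechanism that bounds the \emph{number} of bad $n$. For a single $n$, Langevin's bound $\operatorname{rad}(f(n))\gg_\epsilon X^{k-1-\epsilon}$ combined with $\operatorname{rad}(f(n))\le|f(n)|/q$ gives an \emph{upper} bound $q\ll_\epsilon X^{1+\epsilon}$, not a lower one. That leaves the primes $X<q\le X^{1+\epsilon}$ untouched, and there are far more than $X$ of them. Passing to another primitive point $(x,z)$ of the lattice $x\equiv nz\pmod{q^2}$ only yields $q\ll\max(|x|,|z|)^{2+\epsilon}$, which is again a constraint on $q$ for one $n$. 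Nor can $\epsilon$ be tuned against $\log X$, since the dependence of the abc constant on $\epsilon$ is unknown.

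What is missing is to make several bad values interact, so that their square factors accumulate while the loss $X^{\epsilon}$ is paid only once. One way to do this:
\begin{itemize}
\item Reduce to irreducible $f$. By the resultants, a large prime divides at most one irreducible factor of $f$ at a given $n$. Then $f(x)f(x+h)$ has no repeated roots for $h\neq0$.
\item Suppose $X/2<n<n'\le n+H$ with $q^2\mid f(n)$, $q'^2\mid f(n')$ and $q,q'>X^{3/4}$. Langevin's bound for $f(x)f(x+h)$, $h=n'-n$, gives $\operatorname{rad}(f(n)f(n'))\gg_H X^{2k-5/4}$. The square factors force $\operatorname{rad}(f(n)f(n'))\ll X^{2k-3/2}$, also when $q=q'$.
\item Hence, for large $X$, such $n$ are $H$-separated. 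A dyadic decomposition gives at most $X/H+O_H(\log X)$ of them up to $X$, and letting $H\to\infty$ gives $o(X)$.
\item The primes $y<p\le X^{3/4}$ cost only $\ll X/y+X^{3/4}$, and your CRT step then finishes the proof.
\end{itemize}

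A side remark on the statement itself. Your reduction $p^2\mid f(n)/D'\iff p^{2+q_p}\mid f(n)$ requires $\omega_f(p)$ to count roots modulo $p^{2+q_p}$. With the congruence modulo $p^2$ as literally written, every $a$ is counted whenever $q_p\ge1$, because then $p^2\mid D$. The Euler factor, and hence $\lambda_f$, would then vanish.
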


\begin{proof}[Proof of Theorem \ref{analytic t}.]
For a square-free integer $b$, define
\[
H_b(x)=f(\ell x)=(\ell x)^n+a(\ell x)^{n-1}+d(\ell x)^{n-2}+b,
\]
and let
$
D_b=\gcd\{H_b(r):r\in \mathbb Z\}.
$
Since $D_b\mid H_b(0)=b$ and $b$ is square-free, the integer $D_b$ is square-free.
Therefore, in the notation of Theorem \ref{gran}, we have 
 $D_b'=1$. Hence, assuming the
abc-conjecture, Theorem \ref{gran} gives
\[
\#\{1\le r\le C/\ell:H_b(r)\ \text{is square-free}\}
\sim \lambda_b\frac{C}{\ell},
\]
where
\[
\lambda_b=
\prod_{p\ \mathrm{prime}}
\left(1-\frac{\omega_{H_b}(p)}{p^2}\right),
\]
and $\omega_{H_b}(p)$ denotes the number of integers $\alpha$, with
$1\le \alpha\le p^2$, such that
$
H_b(\alpha)\equiv 0 \pmod{p^2}.
$
Observe that $ H_b(p^2)\equiv b \pmod{p^2} $ which is clearly non-zero modulo $p^2$. Hence, we have $\omega_{H_b}(p)\le p^2-1. $
Also note that $\deg(H_b)=n$, so $\omega_{H_b}(p) $ cannot exceed $n$.
Thus, we have
$
\omega_{H_b}(p)\le \min\{n,p^2-1\}.
$
Using these observations, it follows that
\[
\lambda_b\ge
\prod_{p\le\sqrt n}\frac{1}{p^2}
\prod_{p>\sqrt n}\left(1-\frac{n}{p^2}\right)
=\Lambda_n.
\]
Thus, for every fixed square-free integer $b$, the number of integers $c\le C$ such that
$\ell\mid c$ and $f(c)$ is square-free is at least
$
\left(\Lambda_n+o(1)\right)\frac{C}{\ell}.
$

Recall that $G(y) = (-n)^n y + 4(n-2)^{n-2} \left( \frac{a}{2} \right)^n$. Since $G$ is linear, it has no repeated root. Therefore, assuming the abc-conjecture,
Theorem \ref{gran} yields
\[
\#\{1\le b\le B:G(b)/D_G'\ \text{is square-free}\}
\sim \lambda_G B.
\]

Among these integers $b$, consider those which are square-free, divisible by $\kappa$,
and satisfy the congruence conditions required in $(3)$ of Corollary \ref{analytic c}. For each prime
$p\mid \rho$, choose $\gamma_p\in (\mathbb Z/p^2\mathbb Z)^\times$ such that
\[
\gamma_p^p\not\equiv \gamma_p \pmod{p^2}
\quad\text{if }p^2\mid a,
\]
and
\[
\gamma_p^p\equiv \gamma_p \pmod{p^2}
\quad\text{if }p\parallel a.
\]
By the Chinese remainder theorem, there exists $\gamma \in (\mathbb{Z}/\rho^2\mathbb{Z})^\times$
such that
\[
\gamma \equiv -\gamma_p \pmod{p^2}
\qquad \text{for all } p\mid \rho.
\]
We require that
$
b\equiv \gamma \pmod{\rho^2}.
$
If $2\nmid dm$, we also require that
$
b\equiv 1 \pmod{4}.
$
Since $\gcd(\kappa,\rho)=1$, these conditions can be satisfied simultaneously. The square-free density
for multiples of the square-free integer $\kappa$ contributes the factor
$
\frac{\varphi(\kappa)}{\kappa^2\zeta(2)},
$
the congruence condition modulo $\rho^2$ contributes the factor $1/\rho^2$, and the
additional congruence modulo $4$ contributes the factor $1/\delta$. Hence the number of integers $b\le B$ satisfying the above conditions is at least
\[
\left(
\lambda_G\frac{\varphi(\kappa)}{\delta\,\kappa^2\zeta(2)\rho^2}
+o(1)
\right)B.
\]

Let $b$ and $c$ be chosen as above. Since $\ell\mid\kappa\mid b$ and $b$ is square-free,
we have
$
\nu_\ell(b)=1.
$
Also, from $a^2=4d$ and $\ell\mid d$, it follows that $\ell\mid a$. Hence every
non-leading coefficient of
$
f(x)=x^n+ax^{n-1}+dx^{n-2}+b
$
is divisible by $\ell$, whereas the constant term is not divisible by $\ell^2$.
Therefore $f(x)$ is $\ell$-Eisenstein, and hence irreducible. Now consider
$
F(x)=(x^m+c)^n+a(x^m+c)^{n-1}+d(x^m+c)^{n-2}+b.
$
Since $\ell\mid c$, every non-leading coefficient of $F(x)$ is divisible by $\ell$.
Moreover,
$
F(0)=c^n+ac^{n-1}+dc^{n-2}+b=f(c).
$
Because $n\ge 3$ and $\ell$ divides each of $ a,d,c$, the first three terms in $f(c)$ are divisible by
$\ell^2$, and therefore
$
f(c)\equiv b \pmod{\ell^2}.
$
Since $\nu_\ell(b)=1$, it follows that
$
\nu_\ell(f(c))=1.
$
Thus $F(x)$ is also $\ell$-Eisenstein, and hence irreducible.

It remains to verify the hypotheses of Corollary \ref{analytic c}. By construction, $b$ is square-free
and $f(c)$ is square-free. Moreover, for every integer $b\le B$ satisfying the above conditions, we have
\[
\nu_p\!\left((-n)^n b+4(n-2)^{n-2}\left(\frac{a}{2}\right)^n\right)\le 1.
\]
Hence condition (1) of Corollary \ref{analytic c} holds. If $2\nmid dm$, then
$
b\equiv 1\pmod 4,
$
so condition (2) of Corollary \ref{analytic c} holds. Finally, if $p\mid\gcd(d,n)$, then
$
b\equiv \gamma\equiv -\gamma_p \pmod{p^2},
$
and hence
$
-b\equiv \gamma_p \pmod{p^2}.
$
Therefore
\[
(-b)^p\not\equiv -b \pmod{p^2}
\quad\text{when }p^2\mid a,
\]
and
\[
(-b)^p\equiv -b \pmod{p^2}
\quad\text{when }p\parallel a.
\]
Thus condition (3) of Corollary \ref{analytic c} also holds. Consequently, both $f(x)$ and $F(x)$
are monogenic.

Combining the lower bound for admissible values of $b$ with the lower bound for
admissible values of $c$ corresponding to each such $b$, we conclude that the number
of pairs $(b,c)$ with $1\le b\le B$ and $1\le c\le C$ for which both
\[
f(x)=x^n+ax^{n-1}+dx^{n-2}+b
\quad\text{and}\quad
F(x)=(x^m+c)^n+a(x^m+c)^{n-1}+d(x^m+c)^{n-2}+b
\]
are irreducible and monogenic is at least
\[
\Lambda_n\lambda_G
\frac{\varphi(\kappa)}{\delta\,\ell\,\kappa^2\zeta(2)\rho^2}\,BC+o(BC).
\]
This proves the theorem.
\end{proof}

\section*{Declarations}
\subsection*{Data Availability} This manuscript has no associated data.
\subsection*{Conflict of Interest}  The authors declare that they have no competing interests.
\subsection*{Authors Contribution} All authors have equal contribution.


\bibliographystyle{abbrv}
			\bibliography{bib}

@article {Erdos53,
    AUTHOR = {Erd\"os, P.},
     TITLE = {Arithmetical properties of polynomials},
   JOURNAL = {J. London Math. Soc.},
  FJOURNAL = {The Journal of the London Mathematical Society},
    VOLUME = {28},
      YEAR = {1953},
     PAGES = {416--425},
      ISSN = {0024-6107,1469-7750},
   MRCLASS = {10.0X},
  MRNUMBER = {56635},
MRREVIEWER = {L.\ Carlitz},
       DOI = {10.1112/jlms/s1-28.4.416},
       URL = {https://doi.org/10.1112/jlms/s1-28.4.416},
}

@article{jhorar20166,
  title={ When is R[$\theta$] integrally closed?},
  author={Khanduja, Sudesh K and Jhorar, Bablesh},
  journal={Journal of Algebra and Its Applications},
  volume={15},
  number={05},
  pages={1650091},
  year={2016},
  publisher={World Scientific}
}

@article{jones_taiwan,
  title={Monogenic binomial compositions},
  author={Harrington, Joshua and Jones, Lenny},
  journal={Taiwanese Journal of Mathematics},
  volume={24},
  number={5},
  pages={1073--1090},
  year={2020},
  publisher={JSTOR}
}

@article{gaal2022,
  title={On the monogenity of certain binomial compositions},
  author={Ga{\'a}l, Istv{\'a}n},
  journal={JP Journal of Algebra, Number Theory and Applications},
  volume={57},
  pages={1--16},
  year={2022}
}

@article{hasse,
    title={Zahlentheorie},
  author={H. Hasse},
journal={Akademie-Verlag},
year={Berlin, 1963}
}

@article{Gaal_book,
    title={Diophantine equations and power integral bases: Theory and Algorithms, 2nd Edi.},
  author={I. Ga\'{a}l},
journal={Birkh\"{a}auser/Springer},
year={Cham (2019)}
}

@book{h_cohen,
  title={A course in computational algebraic number theory},
  author={Cohen, Henri},
  volume={138},
  year={2013},
  publisher={Springer Science \& Business Media}
}

@article{dedekind,
  title={{\"U}ber den Zusammenhang zwischen der Theorie der Ideale und der Theorie der h{\"o}heren Congruenzen},
  author={Dedekind, Richard},
  year={1878},
  publisher={Dieterich'sche Verlags-Buchhandlung}
}

@article{jakhar_jnt,
  title={On prime divisors of the index of an algebraic integer},
  author={Jakhar, Anuj and Khanduja, Sudesh K and Sangwan, Neeraj},
  journal={Journal of Number Theory},
  volume={166},
  pages={47--61},
  year={2016},
  publisher={Elsevier}
}

@misc{sage,
  title={The Sage Mathematics Software System (Version 9.0)},
  author={Sage Developers, Sagemath},
  year={2020}
}

@article {uchida,
    AUTHOR = {Uchida, K\^oji},
     TITLE = {When is {$Z[\alpha ]$} the ring of the integers?},
   JOURNAL = {Osaka Math. J.},
  FJOURNAL = {Osaka Mathematical Journal},
    VOLUME = {14},
      YEAR = {1977},
    NUMBER = {1},
     PAGES = {155--157},
      ISSN = {0388-0699},
   MRCLASS = {13F05},
  MRNUMBER = {450255},
MRREVIEWER = {E.\ H.\ Batho},
       URL = {http://projecteuclid.org/euclid.ojm/1200770215},
}

@article{jc,
  title={https://studylib.net/doc/8187082/the-discriminant-of-a-composition-of-two},
  author={Cullinan, John},
journal={Department of Mathematics, Bard College, Annandale-On-Hudson, NY 12504},
  year={}
}

@article{jakharrocky,
  title={On primes dividing the index of a quadrinomial},
  author={Jakhar, Anuj},
journal={Rocky Mountain J. Math. },
volume={60},
  number={6},
  pages={2117-2125},
  year={2020}
}

@article{himanshu_ramanujan,
  title={Monogenity of composition of polynomials},
  author={Sharma, Himanshu and Sarma, Ritumoni},
  journal={The Ramanujan Journal},
  volume={67},
  number={2},
  pages={45},
  year={2025},
  publisher={Springer}
}

@article {suman_ffa,
    AUTHOR = {Kaur, Sumandeep and Kumar, Surender and Remete, L\'aszl\'o},
     TITLE = {On the index of power compositional polynomials},
   JOURNAL = {Finite Fields Appl.},
  FJOURNAL = {Finite Fields and their Applications},
    VOLUME = {107},
      YEAR = {2025},
     PAGES = {102642 (20 pages)},
      ISSN = {1071-5797,1090-2465},
   MRCLASS = {11R04 (11R09)},
  MRNUMBER = {4902688},
       DOI = {10.1016/j.ffa.2025.102642},
       URL = {https://doi.org/10.1016/j.ffa.2025.102642},
}

@article{granville,
  title={ABC allows us to count squarefrees.},
  author={Granville, Andrew},
  journal={IMRN: International Mathematics Research Notices},
  volume={1998},
  number={19},
  year={1998}
}

@article{gaalsurvey,
  title={Monogenity and power integral bases: recent developments},
  author={Ga{\'a}l, Istv{\'a}n},
  journal={Axioms},
  volume={13},
  number={7},
  pages={429},
  year={2024},
  publisher={MDPI}
}

@inproceedings{jonespower,
  title={The monogenity of power-compositional Eisenstein polynomials},
  author={Jones, Lenny},
  booktitle={Annales Mathematicae et Informaticae},
  volume={55},
  pages={93--113},
  year={2022},
  organization={Eszterh{\'a}zy K{\'a}roly Egyetem L{\'\i}ceum Kiad{\'o}}
}

@article{harrington2022,
  title={The irreducibility and monogenicity of power-compositional trinomials},
  author={Harrington, Joshua and Jones, Lenny},
  journal={arXiv preprint arXiv:2204.07784},
  year={2022}
}

@article{jones2024monogenicity,
  title={The monogenicity of power-compositional characteristic polynomials},
  author={Jones, Lenny},
  journal={Albanian Journal of Mathematics},
  volume={18},
  number={1},
  pages={21--30},
  year={2024},
  publisher={Research Institute of Science and Technology (RISAT)}
}

@article{jakhar2025study,
  title={A study of monogenity of binomial composition.},
  author={Jakhar, Anuj and Kalwaniya, Ravi and Yadav, Prabhakar},
  journal={Acta Arithmetica},
  volume={221},
  number={4},
  year={2025}
}

@article{kedlaya,
  title={A construction of polynomials with squarefree discriminants},
  author={Kedlaya, Kiran},
  journal={Proceedings of the American Mathematical Society},
  volume={140},
  number={9},
  pages={3025--3033},
  year={2012}
}

@article{JonesNonSquarefree,
  title={Monogenic polynomials with non-squarefree discriminant},
  author={Jones, Lenny},
  journal={Proceedings of the American Mathematical Society},
  volume={148},
  number={4},
  pages={1527--1533},
  year={2020}
}

@article{BarmanNarodeWagh,
  title={On the Monogenity of Polynomials with Non-Squarefree Discriminants},
  author={Barman, Rupam and Narode, Anuj and Wagh, Vinay},
  journal={arXiv preprint arXiv:2506.16496},
  year={2025}
}

\end{document}